\newcommand{\IN}{\ensuremath{\mathbb{N}}}
\DeclareMathOperator{\range}{Im}
\DeclareMathOperator{\sI}{\mathbf{I}}
\DeclareMathOperator{\sPsps}{\mathbf{P}^\mathrm{sps}}
\DeclareMathOperator{\sPfsps}{\mathbf{P}_\mathrm{fin}^\mathrm{sps}}
\DeclareMathOperator{\sD}{\mathbf{D}}
\DeclareMathOperator{\sUf}{\mathbf{U}_\mathrm{f}}
\DeclareMathOperator{\sPhf}{\mathbf{P}_\mathrm{hf}}
\DeclareMathOperator{\Mod}{Mod}
\DeclareMathOperator{\Modf}{Mod_{fin}}
\DeclareMathOperator{\qId}{qId}
\DeclareMathOperator{\var}{var}
\DeclareMathOperator{\Id}{Id}
\DeclareMathOperator{\Imp}{Imp}
\DeclareMathOperator{\Hom}{Hom}
\DeclareMathOperator{\Guf}{\mathcal{G}_\mathrm{uf}}
\newcommand{\AG}[1]{\mathbb{A}({#1})}
\newcommand{\reach}[2][]{\ifthenelse{\equal{#1}{}}{[{#2}\rangle}{[{#2}\rangle_{#1}}}
\newcommand{\restr}{\mathord{\upharpoonright}}
\newcommand{\satisfies}{\ensuremath{\models}}
\newcommand{\nsatisfies}{\ensuremath{\centernot\models}}
\theoremstyle{plain}
\newtheorem{theorem}{Theorem}[section]
\newtheorem{proposition}[theorem]{Proposition}
\newtheorem{lemma}[theorem]{Lemma}
\newtheorem{corollary}[theorem]{Corollary}
\newtheoremstyle{clm}% name of the style to be used
  {}% measure of space to leave above the theorem. E.g.: 3pt
  {}% measure of space to leave below the theorem. E.g.: 3pt
  {\itshape}% name of font to use in the body of the theorem
  {}% measure of space to indent
  {\rmfamily}% name of head font
  {\@.}% punctuation between head and body
  { }% space after theorem head; " " = normal interword space
  {}% Manually specify head
\theoremstyle{clm}
\newtheorem{claim}{Claim}
\theoremstyle{definition}
\newtheorem{definition}[theorem]{Definition}
\newtheorem{example}[theorem]{Example}
\newtheorem{remark}[theorem]{Remark}
\newcommand{\clmqed}[1]{\renewcommand{\qedsymbol}{$\square_\text{Claim~\ref{#1}}$}}
\begin{document}
\title{Graph quasivarieties}

\author{Erkko Lehtonen}

\author{Reinhard P\"oschel}

\address[E. Lehtonen, R. P\"oschel]%
   {Technische Universit\"at Dresden \\
    Institut f\"ur Algebra \\
    01062 Dresden \\
    Germany}

\date{\today}

\begin{abstract}
Introduced by C.~R. Shallon in 1979,
graph algebras establish a useful connection between graph theory and universal algebra.
This makes it possible to investigate graph varieties and graph quasivarieties, i.e., classes of graphs described by identities or quasi\hyp{}identities.
In this paper, graph quasivarieties are characterized as classes of graphs closed under directed unions of isomorphic copies of finite strong pointed subproducts.
\end{abstract}

\maketitle

%%%%%%%%%%%%%%%%%%%%%%%%%%%%%%%%%%%%%%%%%%%%%%%%%%

\section{Introduction}
\label{sec:introduction}

Graph algebras were introduced by C.~R. Shallon in 1979~\cite{Shallon}.
Given a graph $G = (V,E)$
(here always understood as a directed graph without multiple edges, i.e., edges are pairs in $E\subseteq V\times V$),
the corresponding \emph{graph algebra} is $\AG{G} = \langle V \cup \{\infty\}; {\cdot}, \infty \rangle$,
where $\infty \notin V$ is a constant and $\cdot$ is a binary operation given by
$x \cdot y = x$ if $(x,y) \in E$ is a (directed) edge,
otherwise $x \cdot y = \infty$.

Graph algebras establish a useful connection between graph theory and universal algebra.
Graph\hyp{}theoretic notions can be expressed in algebraic terms and,
conversely, algebraic notions, such as identities or quasi\hyp{}identities, can be applied to graphs.
One of the original motivations for graph algebras was to produce classes of examples for
hereditarily non\hyp{}finitely based algebras; see, e.g., \cite{BakMcNWer,McNSha,Shallon}.

Graph algebras never form a variety (in the classical universal algebraic sense).
However, as shown in \cite[Theorems~1.5, 1.6]{KisPosPro}, the investigation of varieties generated by graph algebras and also their subvarieties can be reduced to the investigation of so\hyp{}called \emph{graph varieties}, i.e., classes of graphs definable by identities for their graph algebras.
Such graph varieties were investigated and characterized in \cite{Kiss,Pos-1990};
the corresponding equational logic for graph algebras can be found in \cite{Pos-1989}.
In \cite{KisPosPro,Pos-1989} further results on the lattice of graph varieties are given.

The use of quasi\hyp{}identities (implications) instead of identities leads to the notion of a \emph{graph quasivariety}, i.e., a class of graphs characterizable by quasi\hyp{}identities.
A characterization of graph quasivarieties by closure properties with respect to special graph\hyp{}theoretic constructions (disjoint unions and so\hyp{}called homogeneous subproducts) was provided in \cite{PosWes}, however only for undirected graphs.

In this paper we deal with the remaining open case, namely the characterization of graph quasivarieties for arbitrary (in particular directed) graphs.
The crucial point here in comparision with the case of undirected graphs is that another notion of product (so\hyp{}called strong pointed subproduct) is needed.

Graph quasivarieties are the Galois closed classes of graphs with respect to the Galois connection $\Mod$--$\qId$ induced by the satisfaction relation $\models$ between graphs and quasi\hyp{}identities.
We present a ``Birkhoff-style'' characterization of graph quasivarieties of arbitrary graphs (Theorem~\ref{thm:char}):
\textit{A class of graphs is a graph quasivariety if and only if it is closed under directed unions of isomorphic copies of finite strong pointed subproducts.}

The paper is organized as follows.
In Section~\ref{sec:preliminaries} we introduce necessary notions and notations related to graphs, graph algebras, quasi\hyp{}identities, and quasivarieties.
Section~\ref{sec:sps} mainly deals with the crucial notion of strong pointed subproduct; useful necessary and sufficient conditions for a graph to be isomorphic to a strong pointed subproduct of a family of graphs are derived in Proposition~\ref{prop:IPK-ab}.
The main results are given in Section~\ref{sec:qv}, where
the Characterization Theorem for graph quasivarieties is proved.
As an intermediate step, a version for finite graphs is established first with the help of the conditions of Proposition~\ref{prop:IPK-ab}; this is subsequently extended to arbitrary graphs by taking directed unions.
Finally, in Section~\ref{sec:examples}
we discuss examples and applications of our results.
More specifically, we first particularize our results to the special case of undirected graphs and see how the results of \cite{PosWes} follow.
In order to highlight the difference between varieties and quasivarieties, we provide an example of a graph with the property that the graph variety and the graph quasivariety it generates are distinct.
We also show how certain families of graphs with forbidden subgraphs can be defined by quasi\hyp{}identities.

%%%%%%%%%%%%%%%%%%%%%%%%%%%%%%%%%%%%%%%%%%%%%%%%%%

\section{Preliminaries}
\label{sec:preliminaries}

This paper deals with directed graphs without multiple edges, which will be called simply graphs.

\begin{definition}
A \emph{graph} is an ordered pair $G = (V, E)$ comprising a set $V$ of \emph{vertices} and a set $E \subseteq V \times V$ of \emph{edges.}
We often denote by $V(G)$ and $E(G)$ the set of vertices and the set of edges of a graph $G$, respectively.
An edge of the form $(v,v)$ is called a \emph{loop} at $v$.
A graph is \emph{undirected} if its edge relation is symmetric.
\end{definition}

\begin{definition}
Let $G = (V, E)$ and $G' = (V', E')$ be graphs.
A mapping $h \colon V \to V'$ is a \emph{homomorphism} of $G$ to $G'$, denoted $h \colon G \to G'$, if $h$ maps edges to edges, i.e., for all $u, v \in V$, $(u,v) \in E$ implies $(h(u), h(v)) \in E'$.
A homomorphism $h \colon G \to G'$ is \emph{strong} if, in addition, it maps non\hyp{}edges to non\hyp{}edges, i.e., for all $u, v \in V$, $(u,v) \notin E$ implies $(h(u), h(v)) \notin E'$.
In other words, $h \colon V \to V'$ is a strong homomorphism if for all $u, v \in V$, $(u,v) \in E$ if and only if $(h(u), h(v)) \in E'$.
If $h \colon G \to G'$ is surjective (and strong), then $G'$ is called a \emph{\textup{(}strong\textup{)} homomorphic image} of $G$.
\end{definition}

\begin{definition}
Let $G = (V, E)$ and $G' = (V', E')$ be graphs.
The graph $G$ is a \emph{subgraph} of $G'$, if $V \subseteq V'$ and $E \subseteq E'$, and it is an \emph{induced subgraph}, if additionally $E = E' \cap (V \times V)$.
With no risk of confusion, we may refer to the subgraph of $G'$ induced by a subset $V \subseteq V'$ simply by its set of vertices $V$.
\end{definition}

\begin{definition}
The \emph{direct product} $G = \prod_{i \in I} G_i$ of a family $(G_i)_{i \in I}$ of graphs is defined by
\begin{align*}
V(G) & := \prod_{i \in I} V(G_i) \qquad \text{(Cartesian product),} \\
E(G) & := \bigl\{(a,b) \in V(G) \times V(G) \bigm| \forall i \in I \colon (a(i), b(i)) \in E(G_i) \bigr\}.
\end{align*}
Here, for $a \in V(G)$, $a(i)$ denotes the $i$\hyp{}th component of $a$; thus $a = (a(i))_{i \in I}$.
For each $i \in I$, the \emph{projection} $p_i \colon V(G) \to V(G_i)$, $a \mapsto a(i)$ is a homomorphism $G \to G_i$.
\end{definition}

\begin{definition}
The \emph{union} $G = \bigcup_{i \in I} G_i$ of a family $(G_i)_{i \in I}$ of graphs is defined by
\begin{align*}
V(G) & := \bigcup_{i \in I} V(G_i), &
E(G) & := \bigcup_{i \in I} E(G_i).
\end{align*}
If the vertex sets $V(G_i)$ are pairwise disjoint, then we speak of a \emph{disjoint union} of graphs.
\end{definition}

\begin{definition}
Let $G = (V, E)$ be a graph.
We say that a vertex $v$ is \emph{reachable} from a vertex $u$, if there exists a \emph{walk} from $u$ to $v$ in $G$, i.e., a sequence $v_0, v_1, \dots, v_\ell$ of vertices such that $v_0 = u$, $v_\ell = v$ and $(v_i, v_{i+1}) \in E$ for all $i \in \{0, \dots, \ell - 1\}$.
We denote the set of all vertices of $G$ reachable from $v$ by $\reach[G]{v}$ (or simply by $\reach{v}$).
With no risk of confusion, the same symbol $\reach[G]{v}$ will also be used to refer to the subgraph of $G$ induced by $\reach[G]{v}$.
\end{definition}

\begin{definition}
Let $G = (V, E)$ be a graph, and let $\infty$ be an element distinct from the vertices.
Define a binary operation $\cdot$ on $V \cup \{\infty\}$ by the following rule:
\[
x \cdot y :=
\begin{cases}
x, & \text{if $(x,y) \in E$,} \\
\infty, & \text{otherwise.}
\end{cases}
\]
The algebra $\AG{G} = \langle V \cup \{\infty\}; \cdot, \infty \rangle$ of type $(2,0)$ is called the \emph{graph algebra} of $G$.
\end{definition}

The notion of graph algebra was introduced by Shallon~\cite{Shallon}.
As can easily be seen, all information about a graph $G$ is encoded in an unambiguous way in its graph algebra $\AG{G}$.
Using this encoding, we may view all algebraic properties of $\AG{G}$ as properties of the graph $G$ itself.
In particular, we are interested in the satisfaction of identities or quasi\hyp{}identities by graph algebras.

\begin{definition}
Let $T(X)$ be the set of all \emph{terms} of type $(2,0)$ over a set $X$ of \emph{variables,} using juxtaposition as the binary operation symbol and $\infty$ as the unary operation symbol.
In other words, $T(X)$ is defined inductively as follows:
\begin{enumerate}[label=(\roman*)]
\item\label{terms:1} Every $x \in X$ is a term, and $\infty$ is a term.
\item\label{terms:2} If $t_1$ and $t_2$ are terms, then $(t_1 t_2)$ is a term.
\item $T(X)$ is the set of all terms which can be obtained from \ref{terms:1} and \ref{terms:2} in finitely many steps.
\end{enumerate}

Unless otherwise mentioned, we assume that $X$ is the \emph{standard set of variables} $\{x_1, x_2, x_3, \dots\}$.
The set of variables occurring in a term $t$ is denoted by $\var(t)$.
The leftmost variable occurring in $t$ is denoted by $L(t)$.
A term is called \emph{trivial} if it contains an occurrence of the symbol $\infty$.
\end{definition}

\begin{definition}
An \emph{assignment} of values in a graph algebra $\AG{G}$ for the variables $X$ is a map $h \colon X \to V(G) \cup \{\infty\}$, which will be denoted for brevity by $h \colon X \to \AG{G}$.
Any assignment $h \colon X \to \AG{G}$ extends to a map $\hat{h} \colon T(X) \to V(G) \cup \{\infty\}$ (or briefly $\hat{h} \colon T(X) \to \AG{G}$) as
\[
\hat{h}(t) =
\begin{cases}
h(x), & \text{if $t = x \in X$,} \\
\infty, & \text{if $t = \infty$,} \\
\hat{h}(t_1) \cdot \hat{h}(t_2), & \text{if $t = (t_1 t_2)$.}
\end{cases}
\]
For notational simplicity, we write $h(t)$ for $\hat{h}(t)$, and we call $h(t)$ the \emph{value} of $t$ under the assignment $h$.
The value of $t$ under $h$ obviously depends only on the restriction of $h$ to $\var(t)$;
hence we can define the value of $t$ under a map $h' \colon S \to T$ with $\var(t) \subseteq S \subseteq X$ and $T \subseteq V(G) \cup \{\infty\}$ by taking an arbitrary extension $h \colon X \to \AG{G}$ of $h'$ and setting $h'(t) := h(t)$.
\end{definition}

\begin{definition}
\label{def:term-graph}
To every nontrivial term $t \in T(X)$ we assign a directed graph $G(t) = (V(t), E(t))$, where $V(t) := \var(t)$ and $E(t)$ is defined inductively as
\begin{enumerate}[label=(\roman*)]
\item $E(t) = \emptyset$ if $t = x$ for some $x \in X$,
\item $E(t) = E(t_1) \cup E(t_2) \cup \{ (L(t_1), L(t_2)) \}$ if $t = (t_1 t_2)$.
\end{enumerate}
Note that $G(t)$ is always a connected graph and all its vertices are reachable from $L(t)$.
In fact, a finite graph $G$ is isomorphic to $G(t)$ for some term $t$ if and only if there is a vertex $a \in V(G)$ such that $\reach[G]{a} = V(G)$ (see \cite[Proposition~2.2]{Pos-1990}).
We call such graphs \emph{term graphs.}
Observe that every (strong) homomorphic image of a term graph is a term graph.
\end{definition}

\begin{definition}
Pairs $(t,t')$ in $T(X) \times T(X)$ are called \emph{identities} and they are normally written as $t \approx t'$.
We denote the set of all identities by $\Id(X)$.
A graph $G$ or a graph algebra $\AG{G}$ \emph{satisfies} an identity $t \approx t'$ if for all assignments $h \colon X \to \AG{G}$ we have $h(t) = h(t')$.
We write
\[
G \satisfies t \approx t'
\quad \text{or} \quad
\AG{G} \satisfies t \approx t'
\]
if $\AG{G}$ satisfies $t \approx t'$.
\end{definition}

\begin{proposition}[{see \cite[Proposition~1.5]{PosWes}}]
\label{prop:h(t)=h(L(t))}
Let $G$ be a graph.
\begin{enumerate}[label={\upshape (\arabic*)}]
\item\label{ht=hLt:1}
For a term $t \in T(X)$ and a mapping $h \colon V(t) \to V(G)$, the following are equivalent:
\begin{enumerate}[label={\upshape (\roman*)}]
\item $h(t) \neq \infty$,
\item $h(t) = h(L(t))$,
\item $h$ is a homomorphism of $G(t)$ into $G$.
\end{enumerate}
In particular, if the image of $h$ is not connected, then $h$ is not a homomorphism and we have $h(t) = \infty$.

\item
For nontrivial terms $t, t' \in T(X)$, we have $G \satisfies t \approx t'$ if and only if $\Hom(G(t), G) = \Hom(G(t'), G) =: H$ and $h(L(t)) = h(L(t'))$ for all $h \in H$.
\end{enumerate}
\end{proposition}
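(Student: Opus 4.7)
My plan is to prove part (1) by structural induction on the term $t$, and then derive part (2) from part (1) by analyzing when an assignment $h \colon X \to \AG{G}$ can evaluate a term to $\infty$ versus to a vertex of $G$. For part (1), the base case $t = x$ is immediate, since $G(x)$ is a single vertex with no edges, $L(t) = x$, and $h(t) = h(x) \in V(G)$, so (i), (ii), (iii) hold trivially. For the inductive step $t = (t_1 t_2)$, unfolding the definition of the graph algebra gives $h(t) = h(t_1) \cdot h(t_2)$, which equals $h(t_1)$ when $(h(t_1), h(t_2)) \in E(G)$ and is $\infty$ otherwise; combining this with the induction hypothesis for $t_1$ and $t_2$, together with $L(t) = L(t_1)$ and $E(t) = E(t_1) \cup E(t_2) \cup \{(L(t_1), L(t_2))\}$, yields the equivalence of (i), (ii), (iii) for $t$. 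The ``in particular'' statement follows from the equivalence of (i) and (iii): since $G(t)$ is connected, any homomorphism into $G$ sends $V(t)$ to a connected subset, so if the image of $h$ is not connected then $h$ is not a homomorphism, whence $h(t) = \infty$.

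For part (2), the key consequence of part (1) (together with the fact that $\infty$ is absorbing in the graph algebra) is the following: for any assignment $h \colon X \to \AG{G}$, we have $h(t) \neq \infty$ if and only if $h(V(t)) \subseteq V(G)$ and $h\restr_{V(t)} \in \Hom(G(t), G)$, in which case $h(t) = h(L(t))$. Sufficiency is then immediate: either $h\restr_{V(t)}$ lies in the common set $H$, giving $h(t) = h(L(t)) = h(L(t')) = h(t')$, or $h(t) = \infty$ and the symmetric argument (via $H = \Hom(G(t'), G)$) yields $h(t') = \infty$. For necessity, given any $\phi \in \Hom(G(t), G)$, extend $\phi$ to an assignment $h \colon X \to \AG{G}$ by choosing values in $V(G)$ for variables outside $V(t)$; then $h(t) = \phi(L(t)) \neq \infty$, so $h(t') \neq \infty$ by the identity, and the key consequence gives $h\restr_{V(t')} \in \Hom(G(t'), G)$ with $h(t') = h(L(t'))$. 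Symmetry yields the equality of the two Hom sets and the agreement of left-most variables.

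The main subtlety is a domain-bookkeeping issue around $\Hom(G(t), G) = \Hom(G(t'), G)$: taken literally, this forces $V(t) = V(t')$ whenever the common set is nonempty, since Hom sets of functions with different domains cannot coincide once nonempty. I would address this as a preliminary step inside the necessity direction: if $G \satisfies t \approx t'$ and some $x_0 \in V(t) \setminus V(t')$ existed, then taking $\phi \in \Hom(G(t), G)$ and extending it outside $V(t)$ by a fixed $v_0 \in V(G)$ gives an assignment with both $h(t)$ and $h(t')$ nonzero; toggling $h(x_0)$ to $\infty$ leaves $h\restr_{V(t')}$ unchanged and hence $h(t')$ nonzero, while making $h(t) = \infty$, contradicting the identity. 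Together with the symmetric argument, this forces $V(t) = V(t')$ whenever either Hom set is nonempty; the case where both Hom sets are empty is vacuous, consistent with the observation that every assignment then sends both terms to $\infty$.
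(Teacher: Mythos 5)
Your proof is correct. The paper does not prove this proposition at all --- it is quoted from \cite[Proposition~1.5]{PosWes} --- so there is no in-paper argument to compare against; your structural induction for part (1) and the reduction of part (2) to the observation that $h(t)\neq\infty$ iff $h\restr_{V(t)}$ is a $V(G)$-valued homomorphism of $G(t)$ is exactly the standard route. Your handling of the domain-bookkeeping point (that $\Hom(G(t),G)=\Hom(G(t'),G)\neq\emptyset$ forces $\var(t)=\var(t')$, proved by toggling a variable outside $V(t')$ to $\infty$) addresses a genuine subtlety in the literal reading of the statement and is a worthwhile addition rather than a gap.
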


\begin{definition}
\label{def:implication}
Pairs $(\alpha, \beta) \in \mathcal{P}_\mathrm{fin}(\Id(X)) \times \Id(X)$ are called \emph{implications} or \emph{quasi\hyp{}identities} (or \emph{universal Horn clauses}; see Wechler~\cite[Section~3.3]{Wechler}) and they are normally written as $\alpha \to \beta$.
The finite set $\alpha$ of identities and the identity $\beta$ are called the \emph{premise} and the \emph{consequence} of the implication, respectively.
If $\alpha = \{s_1 \approx t_1, \dots, s_n \approx t_n\}$ and $\beta = s \approx t$, we can write $\alpha \to \beta$ as $(s_1 \approx t_1 \wedge \dots \wedge s_n \approx t_n) \rightarrow s \approx t$.
We denote the set of all implications by $\Imp(X)$.

A graph $G$ or a graph algebra $\AG{G}$ \emph{satisfies} an implication $\alpha \to \beta$ if for all assignments $h \colon X \to \AG{G}$ we have
\[
\bigl( \forall s_i \approx t_i \in \alpha \colon h(s_i) = h(t_i) \bigr)
\implies
h(s) = h(t).
\]
We write
\[
G \satisfies \alpha \rightarrow \beta
\quad \text{or} \quad
\AG{G} \satisfies \alpha \rightarrow \beta
\]
if $\AG{G}$ satisfies $\alpha \rightarrow \beta$.
Note that every identity can be viewed as an implication, because $G \satisfies t \approx t'$ if and only if $G \satisfies \infty \approx \infty \rightarrow t \approx t'$.
\end{definition}

\begin{definition}
The satisfaction relation between graphs and identities induces a Galois connection in the usual way, and the associated mappings, denoted by $\Mod$ and $\Id$, are defined as follows.
For any set $\mathcal{K}$ of graphs and any set $\Sigma$ of identities, let
\begin{align*}
\Mod \Sigma & := \{ G \in \mathcal{G} \mid \forall t \approx t' \in \Sigma \colon G \satisfies t \approx t' \}, \\
\Id \mathcal{K} & := \{ t \approx t' \in \Id(X) \mid \forall G \in \mathcal{K} \colon G \satisfies t \approx t' \}.
\end{align*}
A set $\mathcal{K}$ of graphs is called a \emph{graph variety} if $\mathcal{K} = \Mod \Sigma$ for some set $\Sigma$ of identities, or, equivalently, if $\mathcal{K} = \Mod \Id \mathcal{K}$.
A set $\Sigma$ of identities is called an \emph{equational theory of graphs} if $\Sigma = \Id \mathcal{K}$ for some set $\mathcal{K}$ of graphs, or, equivalently, if $\Sigma = \Id \Mod \Sigma$.
In other words, the Galois closed sets are the graph varieties and equational theories of graphs.

Similarly, the satisfaction relation between graphs and implications induces a Galois connection given by the mappings $\Mod$ and $\qId$ defined as follows.
For any set $\mathcal{K}$ of graphs and any set $\Sigma$ of implications, let
\begin{align*}
\Mod \Sigma & := \{ G \in \mathcal{G} \mid \forall \alpha \rightarrow \beta \in \Sigma \colon G \satisfies \alpha \rightarrow \beta \}, \\
\qId \mathcal{K} & := \{ \alpha \rightarrow \beta \in \Imp(X) \mid \forall G \in \mathcal{K} \colon G \satisfies \alpha \rightarrow \beta \}.
\end{align*}
A set $\mathcal{K}$ of graphs is called a \emph{graph quasivariety} if $\mathcal{K} = \Mod \Sigma$ for some set $\Sigma$ of implications, or, equivalently, if $\mathcal{K} = \Mod \qId \mathcal{K}$.
A set $\Sigma$ of implications is called an \emph{implicational theory of graphs} if $\Sigma = \qId \mathcal{K}$ for some set $\mathcal{K}$ of graphs, or, equivalently, if $\Sigma = \qId \Mod \Sigma$.
In other words, the Galois closed sets are the graph quasivarieties and implicational theories of graphs.

For a set $\Sigma$ of implications, denote by $\Modf \Sigma$ the set of all finite graphs in $\Mod \Sigma$.
\end{definition}

%%%%%%%%%%%%%%%%%%%%%%%%%%%%%%%%%%%%%%%%%%%%%%%%%%

\section{Strong pointed subproducts and directed unions}
\label{sec:sps}

\begin{definition}
Let $G$ be a graph, and let $\bot$ be an element distinct from the vertices.
The \emph{pointed graph} $G^\bot$ is the graph obtained from $G$ by adding a new vertex $\bot$ and edges from $\bot$ to all vertices, including a loop at $\bot$, that is, 
\begin{align*}
V(G^\bot) & := V(G) \cup \{\bot\}, \\
E(G^\bot) & := E(G) \cup \{(\bot, v) \mid v \in V(G^\bot)\}.
\end{align*}

For a family $(G_i)_{i \in I}$ of graphs, the product $\prod_{i \in I} G_i^\bot$ is called the \emph{pointed product} of $(G_i)_{i \in I}$, and every induced subgraph of $\prod_{i \in I} G_i^\bot$ is called a \emph{pointed subproduct} of $(G_i)_{i \in I}$.
For a vertex $a \in V(\prod_{i \in I} G_i^\bot)$, the \emph{support} of $a$ is the set
\[
Y(a) := \{i \in I \mid a(i) \neq \bot\}.
\]
Note that $b \in \reach{a}$ implies $Y(a) \subseteq Y(b)$.
\end{definition}

\begin{definition}
\label{def:sps}
Let $(G_i)_{i \in I}$ be a family of graphs.
A graph $W$ is a \emph{strong pointed subproduct} of $(G_i)_{i \in I}$ if the following conditions hold:
\begin{enumerate}[label={\upshape(\arabic*)}]
\item\label{def:sps:isg}
$W$ is an induced subgraph of $\prod_{i \in I} G_i^\bot$ (i.e., a pointed subproduct of $(G_i)_{i \in I}$).
\item\label{def:sps:sup}
For all $a \in V(W)$, $Y(a) \neq \emptyset$.
\item\label{def:sps:str}
For all $a \in V(W)$ and for all $k \in Y(a)$, $p_k \restr_{\reach[W]{a}} \colon \reach[W]{a} \to G_k$ is a strong homomorphism.
\end{enumerate}
\end{definition}

\begin{definition}
Let $\mathcal{K}$ be a class of graphs.
\begin{enumerate}[label={\upshape{\arabic*.}}]
\item Denote by $\sI \mathcal{K}$ the class of all graphs isomorphic to members of $\mathcal{K}$.
\item Denote by $\sPsps \mathcal{K}$ the class of all strong pointed subproducts of members of $\mathcal{K}$.
\item Denote by $\sPfsps \mathcal{K}$ the class of all finite members of $\sPsps \mathcal{K}$.
As will be clear from the proof of Proposition~\ref{prop:IPK-ab}, the finite members of $\sPsps \mathcal{K}$ can be obtained as strong pointed subproducts with a finite number of factors.
\end{enumerate}
\end{definition}

\begin{proposition}
\label{prop:IPK-ab}
Let $\mathcal{K}$ be a class of graphs and let $W$ be a graph.
Then $W \in \sI \sPsps \mathcal{K}$ if and only if the following conditions hold:
\begin{enumerate}[label={\upshape(\alph*)}]
\item\label{prop:IPK-ab:a}
For all $a \in V(W)$ there exist a graph $G_a \in \mathcal{K}$ and a strong homomorphism $\varphi_a \colon \reach[W]{a} \to G_a$.
\item\label{prop:IPK-ab:b}
For all $a, a' \in V(W)$ with $a \neq a'$ and $\reach[W]{a} = \reach[W]{a'}$, there exist a graph $G_{\{a,a'\}} \in \mathcal{K}$ and a strong homomorphism $\varphi_{\{a,a'\}} \colon \reach[W]{a} \to G_{\{a,a'\}}$ such that $\varphi_{\{a,a'\}}(a) \neq \varphi_{\{a,a'\}}(a')$.
\end{enumerate}
\end{proposition}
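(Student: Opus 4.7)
My plan is to prove the two directions separately, with the ``only if'' direction being essentially an unpacking of definitions and the ``if'' direction requiring an explicit construction of a pointed subproduct.

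For the forward direction, assume $W$ is (identified with) a strong pointed subproduct of a family $(G_i)_{i \in I}$ with each $G_i \in \mathcal{K}$. Given $a \in V(W)$, Definition~\ref{def:sps}\ref{def:sps:sup} guarantees some $k \in Y(a)$, and then \ref{def:sps:str} yields the strong homomorphism $\varphi_a := p_k \restr_{\reach[W]{a}} \colon \reach[W]{a} \to G_k$, proving \ref{prop:IPK-ab:a}. For \ref{prop:IPK-ab:b}, observe first that $\reach[W]{a} = \reach[W]{a'}$ forces $Y(a) = Y(a')$ (using $b \in \reach{a} \Rightarrow Y(a) \subseteq Y(b)$ in both directions), so any coordinate $k$ where the tuples $a$ and $a'$ differ must lie in $Y(a)$, and the strong homomorphism $p_k \restr_{\reach[W]{a}}$ separates $a$ and $a'$.

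For the converse, I would construct the factors of the desired subproduct by letting the index set be
\[
I := V(W) \,\cup\, \bigl\{\{a,a'\} \mid a,a' \in V(W),\ a \neq a',\ \reach[W]{a} = \reach[W]{a'}\bigr\},
\]
and using $G_a,\varphi_a$ from \ref{prop:IPK-ab:a} at indices of the first kind and $G_{\{a,a'\}}, \varphi_{\{a,a'\}}$ from \ref{prop:IPK-ab:b} at indices of the second kind. Define $\psi \colon V(W) \to \prod_{i \in I} V(G_i^\bot)$ by $\psi(w)(i) := \varphi_i(w)$ if $w \in \reach[W]{a}$ for the vertex $a$ defining $i$ (interpreting ``$a$'' as the singleton or as one element of the pair), and $\psi(w)(i) := \bot$ otherwise.

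The remaining verifications are: (i) $\psi$ is injective; (ii) $\psi$ preserves and reflects edges, so that $\psi(W)$ is an induced subgraph of $\prod_i G_i^\bot$ isomorphic to $W$; (iii) conditions \ref{def:sps:sup} and \ref{def:sps:str} hold for $\psi(W)$. For (i), if $\reach[W]{w} \neq \reach[W]{w'}$, then coordinates $w$ and $w'$ of the index set already separate $\psi(w)$ and $\psi(w')$ (one of the two reachability sets fails to contain the other vertex, making one coordinate $\bot$ and the other non-$\bot$); if $\reach[W]{w} = \reach[W]{w'}$ and $w \neq w'$, the coordinate $\{w,w'\}$ separates them by the very choice of $\varphi_{\{w,w'\}}$---this is the point where condition~\ref{prop:IPK-ab:b} is used, and I expect it to be the main conceptual obstacle of the proof. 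For (ii) and (iii), the preservation of edges uses that each $\varphi_i$ is a homomorphism (with $\bot$-coordinates absorbing edges trivially in $G_i^\bot$), while reflection of edges uses strongness of $\varphi_w$ at the coordinate $i = w$, observing that an edge $(\psi(w),\psi(w'))$ in the pointed product forces $\psi(w')(w) \neq \bot$ (no edges terminate at $\bot$ from a non-$\bot$ vertex in $G_w^\bot$) and hence $w' \in \reach[W]{w}$. Finally, $\psi(w)(w) = \varphi_w(w) \neq \bot$ gives \ref{def:sps:sup}, and \ref{def:sps:str} at a coordinate $i \in Y(\psi(w))$ reduces, via the isomorphism $W \cong \psi(W)$, to the fact that the restriction of the strong homomorphism $\varphi_i$ to the induced subgraph $\reach[W]{w} \subseteq \reach[W]{a}$ remains strong.
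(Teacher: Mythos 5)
Your proposal is correct and follows essentially the same route as the paper's own proof: the forward direction extracts projections via conditions \ref{def:sps:sup} and \ref{def:sps:str} of Definition~\ref{def:sps} (using $Y(a)=Y(a')$ for part \ref{prop:IPK-ab:b}), and the converse uses the same index set $I$ (vertices plus separating pairs), the same $\bot$-extended coordinate maps, and the same case analysis for injectivity, strongness, and the verification of Definition~\ref{def:sps}. The only cosmetic difference is that you phrase strongness as edge reflection in the product rather than as non-edge preservation, which is the same argument in contrapositive form.
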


\begin{proof}
``$\Rightarrow$'':
Assume that $W \in \sI \sPsps \mathcal{K}$;
without loss of generality, we may assume that $W \in \sPsps \mathcal{K}$.
Then there is a family $(G_i)_{i \in I}$ of graphs belonging to $\mathcal{K}$ such that conditions \ref{def:sps:isg}, \ref{def:sps:sup}, \ref{def:sps:str} of Definition~\ref{def:sps} hold for $W$.

Let $a \in V(W)$.
By condition \ref{def:sps:sup}, there exists $k \in Y(a)$, and
by condition \ref{def:sps:str}, $p_k \restr_{\reach[W]{a}} \colon \reach[W]{a} \to G_k$ is a strong homomorphism.
Therefore condition \ref{prop:IPK-ab:a} holds with $G_a := G_k$ and $\varphi_a := p_k \restr_{\reach[W]{a}}$.

Let then $a, a' \in V(W)$ with $a \neq a'$ and $\reach[W]{a} = \reach[W]{a'}$.
By condition \ref{def:sps:sup}, $Y(a)$ and $Y(a')$ are nonempty.
Since $a' \in \reach[W]{a}$ implies $Y(a) \subseteq Y(a')$, we must have $Y(a) = Y(a')$.
Consequently, there exists $k \in Y(a)$ such that $a(k) \neq a'(k)$.
By condition \ref{def:sps:str}, $p_k \restr_{\reach[W]{a}} \colon \reach[W]{a} \to G_k$ is a strong homomorphism; moreover $p_k \restr_{\reach[W]{a}}(a) = a(k) \neq a'(k) = p_k \restr_{\reach[W]{a}}(a')$.
Therefore condition \ref{prop:IPK-ab:b} holds with $G_{\{a,a'\}} := G_k$ and $\varphi_{\{a,a'\}} := p_k \restr_{\reach[W]{a}}$.

``$\Leftarrow$'':
Assume that $W$ satisfies conditions \ref{prop:IPK-ab:a} and \ref{prop:IPK-ab:b}.
Let
\[
I := \bigl\{ a \bigm| a \in V(W) \bigr\} \cup \bigl\{ \{a,a'\} \bigm| a, a' \in V(W), \, a \neq a', \, \reach[W]{a} = \reach[W]{a'} \bigr\}.
\]
Let $\widetilde{W}$ be the induced subgraph of $\prod_{i \in I} G_i^\bot$ with
\[
V(\widetilde{W}) :=
\bigl\{ \widetilde{a} \bigm| a \in V(W) \bigr\},
\]
where
$\widetilde{a} := (\widetilde{\varphi}_i(a))_{i \in I}$ (i.e., $p_i(\widetilde{a}) = \widetilde{\varphi}_i(a)$) and
for each $i \in I$ with $i = b$ or $i = \{b,b'\}$, $\widetilde{\varphi}_i \colon W \to G_i^\bot$ is defined by
\[
\widetilde{\varphi}_i(a) :=
\begin{cases}
\varphi_i(a), & \text{if $a \in \reach[W]{b}$,} \\
\bot, & \text{otherwise.}
\end{cases}
\]
Note that $\widetilde{\varphi}_i \colon W \to G_i^\bot$ is a homomorphism; it is not necessarily strong, but $\widetilde{\varphi}_i \restr_{\reach[W]{b}} = \varphi_i$ is a strong homomorphism from $W$ into $G_i$.

First we show that $W \cong \widetilde{W}$.
The mapping $h \colon W \to \widetilde{W}$, $a \mapsto \widetilde{a}$ is a surjective homomorphism by construction.
The claim will follow if we show that $h$ is injective and strong.
For injectivity, let $a, a' \in V(W)$ with $a \neq a'$.
If $\reach[W]{a} = \reach[W]{a'}$, then $\widetilde{\varphi}_{\{a,a'\}}(a) = \varphi_{\{a,a'\}}(a) \neq \varphi_{\{a,a'\}}(a') = \widetilde{\varphi}_{\{a,a'\}}(a')$ by condition~\ref{prop:IPK-ab:b}.
If $\reach[W]{a} \neq \reach[W]{a'}$, then, assuming without loss of generality that $a' \notin \reach[W]{a}$, we have $\widetilde{\varphi}_a(a') = \bot$ and $\widetilde{\varphi}_a(a) = \varphi_a(a) \neq \bot$.
We conclude that there always exists $i \in I$ such that $\widetilde{\varphi}_i(a) \neq \widetilde{\varphi}_i(a')$; therefore, $h(a) = \widetilde{a} \neq \widetilde{a}' = h(a')$.

In order to show that $h$ is strong, assume that $(a,a') \notin E(W)$.
If $a' \notin \reach[W]{a}$, then $\widetilde{a}(a) = \widetilde{\varphi}_a(a) = \varphi_a(a) \neq \bot$ and $\widetilde{a}'(a) = \widetilde{\varphi}_a(a') = \varphi_a(a') = \bot$, so $(\widetilde{a}, \widetilde{a}') \notin E(\widetilde{W})$.
If $a' \in \reach[W]{a}$, then since $\varphi_a \restr_{\reach[W]{a}} \colon \reach[W]{a} \to G_a$ is a strong homomorphism by condition \ref{prop:IPK-ab:a}, we have $(\widetilde{\varphi}_a(a), \widetilde{\varphi}(a')) = (\varphi_a(a), \varphi_a(a')) \notin E(G_a)$, so $(\widetilde{a}, \widetilde{a}') \notin E(\widetilde{W})$.

It remains to show that $\widetilde{W}$ is a strong pointed subproduct of $(G_i)_{i \in I}$, that is, $\widetilde{W}$ satisfies the conditions of Definition~\ref{def:sps}.
Condition \ref{def:sps:isg} holds by definition.
Condition \ref{def:sps:sup} is clear, because $\widetilde{a}(a) = \widetilde{\varphi}_a(a) = \varphi_a(a) \neq \bot$ for all $a \in V(W)$.
As for condition \ref{def:sps:str}, let $\widetilde{a} \in V(\widetilde{W})$ and $k \in Y(\widetilde{a})$.
Then $k = b$ or $k = \{b,b'\}$ for some $b, b' \in V(W)$, and $\bot \neq \widetilde{a}(k) = \widetilde{\varphi}_k(a)$.
By the definition of $\widetilde{\varphi}_k$, this implies $\widetilde{\varphi}_k(a) = \varphi_k(a)$ and $a \in \reach[W]{b}$; hence $\reach[W]{a} \subseteq \reach[W]{b}$.
By condition \ref{prop:IPK-ab:a} or \ref{prop:IPK-ab:b}, $\varphi_k \colon \reach[W]{b} \to G_k$ is a strong homomorphism, and therefore also its restriction $\varphi_k \restr_{\reach[W]{a}} \colon \reach[W]{a} \to G_k$ is strong.
The isomorphism $h \colon W \to \widetilde{W}$ is strong, so the composition
$\varphi_k \restr_{\reach[W]{a}} \circ h^{-1} \restr_{\reach[\widetilde{W}]{\widetilde{a}}}$
is also strong and, in fact, equals $p_k \restr_{\reach[\widetilde{W}]{\widetilde{a}}}$, because
$p_k(\widetilde{x}) = \widetilde{\varphi}_k(x) = \varphi_k(x) = \varphi_k(h^{-1}(\widetilde{x}))$ for all $x \in \reach[W]{a} \subseteq \reach[W]{b}$.
\end{proof}

\begin{remark}
\label{rem:IPsps-closures}
It is not difficult to verify with the help of Definition~\ref{def:sps} or Proposition~\ref{prop:IPK-ab} that
for any class $\mathcal{K}$ of graphs,
$\sI \sPsps \mathcal{K}$ contains
induced subgraphs,
disjoint unions,
direct products,
and
strong homomorphic images
of members of $\mathcal{K}$.
\end{remark}

\begin{definition}
A family $(G_i)_{i \in I}$ of graphs is \emph{directed} if for all $i, j \in I$ there exists $k \in I$ such that both $G_i$ and $G_j$ are induced subgraphs of $G_k$.
The union of a directed family of graphs is called a \emph{directed union.}
For a class $\mathcal{K}$ of graphs, denote by $\sD \mathcal{K}$ the class of all directed unions of members of $\mathcal{K}$.
\end{definition}

\begin{remark}
Note that each $G_i$ is an induced subgraph of the directed union $\bigcup_{i \in I} G_i$.
Moreover, every graph is a directed union of its finite induced subgraphs.
\end{remark}

%%%%%%%%%%%%%%%%%%%%%%%%%%%%%%%%%%%%%%%%%%%%%%%%%%

\section{Characterization of graph quasivarieties}
\label{sec:qv}

We are now ready to characterize graph quasivarieties in terms of closure operators built using purely graph\hyp{}theoretical constructions.

\begin{theorem}[Characterization Theorem]
\label{thm:char}
For any class $\mathcal{K}$ of graphs,
\[
\Mod \qId \mathcal{K} = \sD \sI \sPfsps \mathcal{K}.
\]
\end{theorem}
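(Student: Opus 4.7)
My plan is to prove the two inclusions separately, using Proposition~\ref{prop:IPK-ab} as the structural bridge and extracting a finite version of the theorem along the way.

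\textbf{Soundness ($\sD \sI \sPfsps \mathcal{K} \subseteq \Mod \qId \mathcal{K}$).} A quasi-identity uses only finitely many variables, and each piece $W_j$ of a directed union $\bigcup_j W_j$ is an induced subgraph of the union, which makes $\AG{W_j}$ a subalgebra of $\AG{\bigcup_j W_j}$; by directedness, any potential violation of $\alpha \to \beta$ is already visible in some $W_j$. So I only need $\sI \sPfsps \mathcal{K} \subseteq \Mod \qId \mathcal{K}$. For such $W$ and $\alpha \to \beta \in \qId \mathcal{K}$, I would invoke Proposition~\ref{prop:IPK-ab} to extract strong homomorphisms $\varphi_a \colon \reach[W]{a} \to G_a \in \mathcal{K}$ for each $a \in V(W)$, together with separating $\varphi_{\{a,a'\}}$ for pairs with $\reach[W]{a} = \reach[W]{a'}$. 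Given an assignment $h \colon X \to \AG{W}$ satisfying the premises, for any such $\varphi$ with domain $V_0$ I would define the pushforward $h^\varphi(x) := \varphi(h(x))$ if $h(x) \in V_0$, else $h^\varphi(x) := \infty$. An induction on term structure---crucial in the case a subterm evaluates to $\infty$ because of a missing edge between two vertices both in $V_0$, handled by the \emph{strongness} of $\varphi$---then shows $h^\varphi(u) = \varphi(h(u))$ if $h(u) \in V_0$ and $h^\varphi(u) = \infty$ otherwise. Hence $h^\varphi$ satisfies the premises in the target and $\alpha \to \beta$ yields $h^\varphi(s) = h^\varphi(t)$. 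A case split on $h(s), h(t)$ (both in $V(W)$ with equal or distinct reach sets, or one equal to $\infty$) forces $h(s) = h(t)$: a discrepancy of the form $h(s) = r \in V(W)$, $h(t) = \infty$ (or $h(t) \notin \reach[W]{r}$) would give $\varphi_r(r) = \infty$, absurd; the remaining subcase $\reach[W]{r} = \reach[W]{r'}$ is ruled out by the separating $\varphi_{\{r,r'\}}$.

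\textbf{Completeness ($\Mod \qId \mathcal{K} \subseteq \sD \sI \sPfsps \mathcal{K}$).} Write $W \in \Mod \qId \mathcal{K}$ as the directed union of its finite induced subgraphs $W'$. Inducedness again makes $\AG{W'}$ a subalgebra of $\AG{W}$, and universal Horn formulas are preserved by subalgebras, so $W' \in \Modf \qId \mathcal{K}$. This reduces the problem to the \emph{finite case} $\Modf \qId \mathcal{K} \subseteq \sI \sPfsps \mathcal{K}$, which I would establish by verifying conditions (a) and (b) of Proposition~\ref{prop:IPK-ab} by contradiction. For (a), suppose some $a \in V(W')$ admits no strong homomorphism $F := \reach[W']{a} \to G$ for any $G \in \mathcal{K}$. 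Since $F$ is a finite term graph rooted at $a$, write $F \cong G(t)$ for some term $t$ with $L(t) = x_a$, and consider the quasi-identity
\[
\Bigl( t \approx x_a \;\wedge\; \bigwedge_{(u,v) \in V(F)^2 \setminus E(F)} x_u x_v \approx \infty \Bigr) \rightarrow x_a \approx \infty.
\]
By Proposition~\ref{prop:h(t)=h(L(t))}, any assignment $h \colon X \to \AG{G}$ with $G \in \mathcal{K}$ making the premises hold with $h(x_a) \neq \infty$ would be a strong homomorphism $F \to G$, contradicting the assumption; hence the quasi-identity is valid in $\mathcal{K}$. Yet the identity-on-$V(F)$ assignment witnesses its failure in $W'$ (inducedness of $F$ in $W'$ is used here for the non-edge premises), contradicting $W' \in \Modf \qId \mathcal{K}$. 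Condition (b) is handled by the same device with two terms $t, t'$ rooted at $a$ and $a'$ and consequence $x_a \approx x_{a'}$. The strong pointed subproduct produced by Proposition~\ref{prop:IPK-ab} has index set built from vertices and certain pairs of vertices of the finite $W'$ and is therefore finite, placing $W'$ in $\sI \sPfsps \mathcal{K}$.

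\textbf{Main obstacle.} The technical heart of the argument is the pushforward induction in the soundness direction: one must verify $h^\varphi(u) = \infty$ for every way $h(u)$ can be $\infty$ in $\AG{W}$, and the case of an edge failure between two vertices of $V_0 = \reach[W]{a}$ is precisely where strongness of $\varphi$ on the entire reach set is indispensable.
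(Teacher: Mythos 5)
Your proposal is correct and follows essentially the same route as the paper: reduce to finite induced subgraphs via closure of $\Mod\Sigma$ under directed unions, characterize the finite members through conditions \ref{prop:IPK-ab:a} and \ref{prop:IPK-ab:b} of Proposition~\ref{prop:IPK-ab} by building explicit separating quasi\hyp{}identities in one direction and pushing assignments forward along the strong homomorphisms $\varphi_a$, $\varphi_{\{a,a'\}}$ (with the same case analysis on $h(s)$, $h(t)$) in the other. The only cosmetic difference is that you encode the edge structure of $\reach[W']{a}$ by the single premise $t \approx x_a$ rather than by the set $\Gamma_\mathrm{e}(\reach[W']{a})$; these are equivalent by Proposition~\ref{prop:h(t)=h(L(t))}.
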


The remainder of this section will lead to the proof of the Characterization Theorem.
We first use Proposition~\ref{prop:IPK-ab} to describe the finite members of a graph quasivariety $\Mod \qId \mathcal{K}$.
As an intermediate step, we obtain a ``finite version'' of the Characterization Theorem (see Theorem~\ref{thm:char-finite}), which asserts that the finite members of $\Mod \qId \mathcal{K}$ are precisely the isomorphic copies of finite strong pointed subproducts of members of $\mathcal{K}$.
Finally, we show that the general members of $\Mod \qId \mathcal{K}$ are precisely the directed unions of its finite members.

We begin with a straightforward way of encoding a graph as a set of identities.
This construction will be used repeatedly in the sequel.

\begin{definition}
\label{def:Sigma-Gamma}
Let $G$ be a graph.
For each $a \in V(G)$, let $x_a := a$, and let $X_{V(G)} := \{x_a \mid a \in V(G)\} = V(G)$.
Define the sets $\Gamma_\mathrm{e}(G)$, $\Gamma_\mathrm{n}(G)$, and $\Sigma(G)$ of identities over the set $X_{V(G)}$ of variables as follows:
\begin{align*}
\Gamma_\mathrm{e}(G) &:= \{ x_a x_b \approx x_a \mid a, b \in V(G), \, (a,b) \in E(G) \}, \\
\Gamma_\mathrm{n}(G) &:= \{ x_a x_b \approx \infty \mid a, b \in V(G), \, (a,b) \notin E(G) \}, \\
\Sigma(G) &:= \Gamma_\mathrm{e}(G) \cup \Gamma_\mathrm{n}(G).
\end{align*}
The vertices of $G$ play a double role here, and we have intentionally created two different ways of referring to them: as $a$ when viewed as vertices of $G$ and as $x_a$ when viewed as variables.
\end{definition}

\begin{lemma}
\label{lem:SigmaG}
Let $G$ and $H$ be graphs.
\begin{enumerate}[label={\upshape(\roman*)}]
\item\label{lem:SigmaG-Gamma} An assignment $h \colon X_{V(G)} \to V(H)$ makes every identity of $\Gamma_\mathrm{e}(G)$ true in $\AG{H}$ if and only if $h$ is a homomorphism of $G$ into $H$.
\item\label{lem:SigmaG-Sigma} An assignment $h \colon X_{V(G)} \to V(H)$ makes every identity of $\Sigma(G)$ true in $\AG{H}$ if and only if $h$ is a strong homomorphism of $G$ into $H$.
\item\label{lem:SigmaG-inf} If $h \colon X_{V(G)} \to V(H) \cup \{\infty\}$ is an assignment that makes every identity of $\Gamma_\mathrm{e}(G)$ true in $\AG{H}$ and $h(x_a) \neq \infty$, then $h(x_b) \neq \infty$ for all $b \in \reach[G]{a}$.
\end{enumerate}
\end{lemma}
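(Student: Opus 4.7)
The plan is to unpack the three statements directly from the definition of the operation in the graph algebra $\AG{H}$. Recall that for any $u, v \in V(H) \cup \{\infty\}$ we have $u \cdot v = u$ if $(u,v) \in E(H)$ and $u \cdot v = \infty$ otherwise; in particular, since $E(H) \subseteq V(H) \times V(H)$, the product $u \cdot v$ equals $\infty$ whenever either factor is $\infty$. Thus for any assignment $h$ and any $a, b \in V(G)$, the value $h(x_a x_b)$ can be read off from whether $(h(x_a), h(x_b))$ is an edge of $H$.

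For \ref{lem:SigmaG-Gamma}, I would observe that since $h$ maps into $V(H)$, the value $h(x_a)$ is never $\infty$; consequently the identity $x_a x_b \approx x_a$ holds under $h$ precisely when $h(x_a) \cdot h(x_b) = h(x_a) \neq \infty$, which by the defining rule of $\AG{H}$ amounts to $(h(x_a), h(x_b)) \in E(H)$. Running this equivalence over all $(a,b) \in E(G)$ yields the homomorphism condition, and conversely. For \ref{lem:SigmaG-Sigma}, I would reuse this computation and add the symmetric one for $\Gamma_\mathrm{n}(G)$: for $(a,b) \notin E(G)$, $h(x_a x_b) \approx \infty$ is equivalent (again using that $h$ avoids $\infty$) to $(h(x_a), h(x_b)) \notin E(H)$. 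Combining the two cases gives exactly the strong homomorphism condition.

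For \ref{lem:SigmaG-inf}, the argument is by induction on the length of a walk witnessing $b \in \reach[G]{a}$. The base case $b = a$ is immediate. For the inductive step, suppose $a = v_0, v_1, \dots, v_\ell = b$ is a walk in $G$ and that $h(x_{v_i}) \neq \infty$. Since $(v_i, v_{i+1}) \in E(G)$, the identity $x_{v_i} x_{v_{i+1}} \approx x_{v_i}$ is in $\Gamma_\mathrm{e}(G)$, so $h(x_{v_i}) \cdot h(x_{v_{i+1}}) = h(x_{v_i})$, and the right-hand side is not $\infty$ by hypothesis. If $h(x_{v_{i+1}})$ were $\infty$, then by the preliminary observation the left-hand side would collapse to $\infty$, a contradiction. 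Hence $h(x_{v_{i+1}}) \neq \infty$, completing the induction.

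There is no genuine obstacle here; the proof is a direct application of the defining rule of the graph algebra. The only subtle point, which warrants care in writing, is that in \ref{lem:SigmaG-inf} the assignment is allowed to take the value $\infty$, so one must explicitly exploit that $\infty$ annihilates the product in order to derive the contradiction in the inductive step.
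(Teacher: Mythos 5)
Your proof is correct and follows essentially the same route as the paper: parts \ref{lem:SigmaG-Gamma} and \ref{lem:SigmaG-Sigma} are the same direct unpacking of the graph-algebra operation, and part \ref{lem:SigmaG-inf} rests on the same observation that $\infty$ annihilates the product along a walk. The only (cosmetic) difference is in \ref{lem:SigmaG-inf}: you propagate non-$\infty$ values edge by edge via induction, whereas the paper chains the edge identities into a single nested term by the replacement rule and derives the contradiction from its value; your version is, if anything, slightly more self-contained.
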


\begin{proof}
\ref{lem:SigmaG-Gamma} and \ref{lem:SigmaG-Sigma}:
Assume that $h \colon X_{V(G)} \to V(H)$ makes every identity of $\Gamma_\mathrm{e}(G)$ true in $\AG{H}$.
Then for all $a, b \in V(G)$ with $(a,b) \in E(G)$ we have $h(a) h(b) = h(x_a) h(x_b) = h(x_a) = h(a)$ in $\AG{H}$, that is, $(h(a),h(b)) \in E(H)$.
Hence, $h$ is a homomorphism of $G$ into $H$.

If, additionally, $h$ makes every identity of $\Gamma_\mathrm{n}(G)$ true in $\AG{H}$, then
for all $a, b \in V(G)$ with $(a,b) \notin E(G)$ we have $h(a) h(b) = h(x_a) h(x_b) = \infty$ in $\AG{H}$, that is, $(h(a),h(b)) \notin E(H)$.
This implies that $h$ is a strong homomorphism.

Assume now that $h$ is a homomorphism of $G$ into $H$.
Then for all $a, b \in V(G)$ with $(a,b) \in E(G)$ we have $(h(a),h(b)) \in E(H)$, whence $h(x_a) h(x_b) = h(a) h(b) = h(a) = h(x_a)$.
Therefore $h$ makes every identity of $\Gamma_\mathrm{e}(G)$ true in $\AG{H}$.

If, additionally, $h$ is a strong homomorphism, then
for all $a, b \in V(G)$ with $(a,b) \notin E(G)$ we have $(h(a),h(b)) \notin E(H)$, so $h(x_a) h(x_b) = h(a) h(b) = \infty$.
Consequently, $h$ makes every identity of $\Gamma_\mathrm{n}(G)$ and hence every identity of $\Sigma(G)$ true in $\AG{H}$.

\ref{lem:SigmaG-inf}:
Suppose, to the contrary, that there is $b \in \reach[W]{a}$ such that $h(x_b) = \infty$.
Then there exist vertices $c_1, c_2, \dots, c_n \in \reach[G]{a}$ such that $(a,c_1), (c_1,c_2), \dots, (c_n,b) \in E(G)$;
hence $\Gamma_\mathrm{e}(G)$ contains the identities $x_a x_{c_1} \approx x_a$, $x_{c_1} x_{c_2} \approx x_{c_1}$, \dots, $x_{c_n} x_b \approx x_{c_n}$.
Since the assignment $h$ makes every identity of $\Gamma_\mathrm{e}(G)$ true in $\AG{H}$, it also makes true the identity
$x_a (x_{c_1} (x_{c_2} ( \cdots ( x_{c_{n-1}} ( x_{c_n} x_b ) ) \cdots ))) \approx x_a$ that is obtained from the above identities by the replacement rule.
Since $h(x_b) = \infty$, the value of the left side of this identity under $h$ is $\infty$, while the value of the right side is $h(x_a) \neq \infty$,
a contradiction.
\end{proof}

\begin{proposition}
\label{prop:Mod-qId-ab}
Let $\mathcal{K}$ be a class of graphs and let $W$ be a finite graph.
Then $W \in \Modf \qId \mathcal{K}$ if and only if
conditions \ref{prop:IPK-ab:a} and \ref{prop:IPK-ab:b} of Proposition~\ref{prop:IPK-ab} hold for $W$.
\end{proposition}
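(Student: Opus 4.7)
The plan is to route both directions through Lemma~\ref{lem:SigmaG}, which translates identities in $\Sigma(G)$ into (strong) homomorphisms out of $G$; the natural implications to consider are those with premise $\Sigma(\reach[W]{a})$. For direction ``$\Rightarrow$'' I proceed contrapositively. If condition~\ref{prop:IPK-ab:a} fails at some $a \in V(W)$, meaning no $G \in \mathcal{K}$ admits a strong homomorphism from $\reach[W]{a}$, then I claim that the implication $\Sigma(\reach[W]{a}) \to (x_a \approx \infty)$ lies in $\qId \mathcal{K}$ but is refuted in $W$. Membership in $\qId \mathcal{K}$: any $h \colon X \to V(G) \cup \{\infty\}$ with $G \in \mathcal{K}$ satisfying $\Sigma(\reach[W]{a})$ and $h(x_a) \neq \infty$ would, by Lemma~\ref{lem:SigmaG}(iii), have $h(x_b) \neq \infty$ for every $b \in \reach[\reach[W]{a}]{a} = V(\reach[W]{a})$, and then Lemma~\ref{lem:SigmaG}(ii) would make $h$ the forbidden strong homomorphism. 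Refutation in $W$: since $\reach[W]{a}$ is an induced subgraph, the identity assignment $h(x_b) := b$ satisfies $\Sigma(\reach[W]{a})$ while $h(x_a) = a \neq \infty$. The argument for~\ref{prop:IPK-ab:b} is parallel, using the implication $\Sigma(\reach[W]{a}) \to (x_a \approx x_{a'})$; the additional ingredient is that $\reach[W]{a} = \reach[W]{a'}$ makes $a$ and $a'$ mutually reachable, so Lemma~\ref{lem:SigmaG}(iii) forbids exactly one of $h(x_a), h(x_{a'})$ from being $\infty$.

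For direction ``$\Leftarrow$'', assume~\ref{prop:IPK-ab:a} and~\ref{prop:IPK-ab:b} hold and take any $\alpha \to (s \approx t) \in \qId \mathcal{K}$ together with an assignment $h \colon X \to V(W) \cup \{\infty\}$ satisfying $\alpha$ in $\AG{W}$; aiming at a contradiction, assume $h(s) \neq h(t)$. The plan is to pick a strong homomorphism $\varphi \colon \reach[W]{a} \to G$ with $G \in \mathcal{K}$ supplied by~\ref{prop:IPK-ab:a} or~\ref{prop:IPK-ab:b}, and to transport $h$ to $h' \colon X \to V(G) \cup \{\infty\}$ via
\[
h'(x) := \begin{cases} \varphi(h(x)) & \text{if } h(x) \in \reach[W]{a}, \\ \infty & \text{otherwise.} \end{cases}
\]
A routine induction on terms yields the key \emph{term-transfer property}: $h'(u) = \varphi(h(u))$ whenever $h(u) \in \reach[W]{a}$, and $h'(u) = \infty$ otherwise. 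The inductive step for $u = u_1 u_2$ relies on $\reach[W]{a}$ being an induced subgraph closed under reachability (so edges in $W$ emanating from $\reach[W]{a}$ remain inside $\reach[W]{a}$) and on $\varphi$ being strong (so non-edges inside $\reach[W]{a}$ map to non-edges of $G$). The term-transfer property immediately yields $h'(s_i) = h'(t_i)$ for every $s_i \approx t_i \in \alpha$, since both sides fall into the same clause of the case distinction, and thus $h'$ satisfies $\alpha$ in $\AG{G}$.

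The main obstacle, and the only remaining step, is choosing $a$ so that also $h'(s) \neq h'(t)$, which would make $h'$ refute $\alpha \to (s \approx t)$ in $\AG{G}$, contradicting $\alpha \to (s \approx t) \in \qId \mathcal{K}$. Without loss of generality, $h(s) \neq \infty$; set $a := h(s)$. If $h(t) = \infty$ or $h(t) \in V(W) \setminus \reach[W]{a}$, then~\ref{prop:IPK-ab:a} applied at $a$ yields $h'(s) = \varphi_a(a) \neq \infty = h'(t)$. The symmetric case $h(s) \notin \reach[W]{h(t)}$ uses~\ref{prop:IPK-ab:a} at $h(t)$. The remaining case is $h(s) = a$ and $h(t) = a'$ with $a \neq a'$ and $\reach[W]{a} = \reach[W]{a'}$; here~\ref{prop:IPK-ab:b} supplies $\varphi_{\{a,a'\}}$ separating $a$ and $a'$, and the term-transfer property gives $h'(s) = \varphi_{\{a,a'\}}(a) \neq \varphi_{\{a,a'\}}(a') = h'(t)$. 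Thus exactly the two conditions~\ref{prop:IPK-ab:a},~\ref{prop:IPK-ab:b} are what the case analysis needs, explaining why the characterization in terms of both is tight.
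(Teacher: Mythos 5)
Your proposal is correct and takes essentially the same route as the paper: the forward direction uses the same witnessing implications $\Sigma(\reach[W]{a}) \rightarrow (x_a \approx \infty)$ and $\Sigma(\reach[W]{a}) \rightarrow (x_a \approx x_{a'})$ via Lemma~\ref{lem:SigmaG}, and the converse uses the same transported assignments, the same term-transfer facts (the paper's Claims~\ref{clm:1}--\ref{clm:3}), and the same case analysis on the mutual reachability of $h(s)$ and $h(t)$. The only cosmetic difference is that the paper sets up the whole indexed family of maps $h_i$ before the case split, whereas you choose the relevant strong homomorphism after it.
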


\begin{proof}
``$\Rightarrow$'':
We prove the contrapositive.
Let $W$ be a graph, and assume that condition \ref{prop:IPK-ab:a} or \ref{prop:IPK-ab:b} does not hold for $W$.
Our goal is to construct an implication $\alpha \rightarrow \beta$ such that $G \satisfies \alpha \rightarrow \beta$ for all $G \in \mathcal{K}$ but $W \nsatisfies \alpha \rightarrow \beta$.
This means that $W \notin \Mod \qId \mathcal{K}$.

Assume first that condition \ref{prop:IPK-ab:a} does not hold for $W$.
Then there is a vertex $a \in V(W)$ such that there exists no strong homomorphism of $\reach[W]{a}$ into any graph $G \in \mathcal{K}$.
Consider the implication $\alpha \rightarrow \beta$ with $\alpha := \Sigma(\reach[W]{a})$, $\beta := x_a \approx \infty$ over the set $X_{\reach[W]{a}} = \{x_b \mid b \in \reach[W]{a}\} = \reach[W]{a}$ of variables.
(Working with an implication over a non\hyp{}standard set of variables is of no important consequence here. Since $X_{\reach[W]{a}}$ is a finite set, we can rename its elements to get an equivalent implication over the standard set of variables.)
We clearly have $W \nsatisfies \alpha \rightarrow \beta$,
because $h \colon X_{\reach[W]{a}} \to V(W) \cup \{\infty\}$, $x_b \mapsto b$, is an assignment that makes $\alpha$ true and $\beta$ false.

On the other hand, $G \satisfies \alpha \rightarrow \beta$ for all $G \in \mathcal{K}$.
For, suppose, to the contrary, that there is a graph $G' \in \mathcal{K}$ such that $G' \nsatisfies \alpha \rightarrow \beta$.
Then there exists an assignment $h \colon X_{\reach[W]{a}} \to V(G') \cup \{\infty\}$
that makes every identity in $\alpha$ true and $\beta$ false,
that is,
\begin{equation}
\begin{aligned}
h(x_b) h(x_{b'}) &= h(x_b) && \text{for all $b, b' \in \reach[W]{a}$ with $(b,b') \in E(W)$  and} \\
h(x_b) h(x_{b'}) &= \infty && \text{for all $b, b' \in \reach[W]{a}$ with $(b,b') \notin E(W)$,}
\end{aligned}
\label{eq:hbhb'}
\end{equation}
but
$h(x_a) \neq \infty$.
By Lemma~\ref{lem:SigmaG}\ref{lem:SigmaG-inf}, we have $h(x_b) \neq \infty$, i.e., $h(x_b) \in V(G')$ for all $b \in \reach[W]{a}$.
It follows from Lemma~\ref{lem:SigmaG}\ref{lem:SigmaG-Sigma} that $h$ is a strong homomorphism of $\reach[W]{a}$ into $G'$.
This contradicts our assumption that no strong homomorphism exists from $\reach[W]{a}$ into any graph in $\mathcal{K}$.

Assume now that condition \ref{prop:IPK-ab:b} does not hold for $W$.
Then there exist $a, a' \in V(W)$ with $a \neq a'$, $\reach[W]{a} = \reach[W]{a'}$ such that for every graph $G \in \mathcal{K}$ and for every strong homomorphism $\varphi \colon \reach[W]{a} \to G$ we have $\varphi(a) = \varphi(a')$.
Consider the implication $\alpha \rightarrow \beta$ with $\alpha := \Sigma(\reach[W]{a})$, $\beta := x_a \approx x_{a'}$ over the set $X_{\reach[W]{a}}$ of variables.
We clearly have $W \nsatisfies \alpha \rightarrow \beta$,
because $h \colon X_{\reach[W]{a}} \to V(W) \cup \{\infty\}$, $x_b \mapsto b$, is an assignment that makes $\alpha$ true and $\beta$ false.

On the other hand, $G \satisfies \alpha \rightarrow \beta$ for all $G \in \mathcal{K}$.
For, suppose, to the contrary, that there is a graph $G' \in \mathcal{K}$ such that $G' \nsatisfies \alpha \rightarrow \beta$.
Then there is an assignment $h \colon X_{\reach[W]{a}} \to V(G') \cup \{\infty\}$
that makes every identity in $\alpha$ true and $\beta$ false; hence $h(x_a) \neq h(x_{a'})$.
Without loss of generality, we may assume that $h(x_a) \neq \infty$.
As shown above, this implies that $h(x_b) \neq \infty$, i.e., $h(x_b) \in V(G')$ for all $b \in \reach[W]{a}$.
Consequently, $h$ is a strong homomorphism of $\reach[W]{a}$ into $G'$ by Lemma~\ref{lem:SigmaG}\ref{lem:SigmaG-Sigma};
moreover $h(a) = h(x_a) \neq h(x_{a'}) = h(a')$.
This is a contradiction to our assumption on the strong homomorphisms into graphs in $\mathcal{K}$ (take $G = G'$ and $\varphi = h$).

``$\Leftarrow$'':
Assume that conditions \ref{prop:IPK-ab:a} and \ref{prop:IPK-ab:b} of Proposition~\ref{prop:IPK-ab} hold for $W$.
Let $\alpha \rightarrow \beta \in \qId \mathcal{K}$, with
$\alpha = \{ s_1 \approx t_1, \dots, s_n \approx t_n \}$,
$\beta = s \approx t$.
We need to show that $W \satisfies \alpha \rightarrow \beta$.

Suppose, to the contrary, that $W \nsatisfies \alpha \rightarrow \beta$.
Then there must exist an assignment $h \colon X \to V(W) \cup \{\infty\}$ such that
$h(s_1) = h(t_1)$, \dots, $h(s_n) = h(t_n)$
but $h(s) \neq h(t)$.
Without loss of generality, we may assume that $h(s) \neq \infty$; thus $h(s) = h(L(s))$.

Let
\[
I := \bigl\{ a \bigm| a \in V(W) \bigr\} \cup \bigl\{ \{a,a'\} \bigm| a, a' \in V(W), \, a \neq a', \, \reach[W]{a} = \reach[W]{a'} \bigr\}.
\]
For each $i \in I$, with $i = a$ or $i = \{a,a'\}$, let $G_i$ and $\varphi_i \colon \reach[W]{a} \to G_i$ be the graph and the strong homomorphism provided by condition \ref{prop:IPK-ab:a} or \ref{prop:IPK-ab:b}, and define the map $h_i \colon X \to V(G_i) \cup \{\infty\}$ by the rule
\[
h_i(x) :=
\begin{cases}
\varphi_i(h(x)), & \text{if $h(x) \in \reach[W]{a}$,} \\
\infty, & \text{otherwise.}
\end{cases}
\]
We establish some useful properties of the maps $h_i$ in the following three claims.

\begin{claim}
\label{clm:1}
Let $i \in I$ with $i = a$ or $i = \{a,a'\}$, and let $t' \in T(X)$.
If $h(t') \notin \reach[W]{a}$, then $h_i(t') = \infty$.
\end{claim}

\begin{proof}
Assume $h(t') \notin \reach[W]{a}$.
If there is an $x \in \var(t')$ with $h(x) \notin \reach[W]{a}$, then $h_i(x) = \infty$; hence $h_i(t') = \infty$.
If for all $x \in \var(t')$ we have $h(x) \in \reach[W]{a}$, then $h \restr_{\var(t')}$ is not a homomorphism $G(t') \to W$ (otherwise $h(t') = h(L(t')) \in \reach[W]{a}$, a contradiction; see Proposition~\ref{prop:h(t)=h(L(t))}~\ref{ht=hLt:1}).
Therefore there exists an edge $(x,y) \in E(G(t'))$ such that $(h(x),h(y)) \notin E(W)$.
Since $\varphi_i$ is a strong homomorphism, we have $(h_i(x),h_i(y)) = (\varphi_i(h(x)),\varphi_i(h(y))) \notin E(G_i)$; hence $h_i(t') = \infty$.
\clmqed{clm:1}
\end{proof}

\begin{claim}
\label{clm:2}
Let $i \in I$ with $i = a$ or $i = \{a,a'\}$, and let $t' \in T(X)$.
If $h(t') = h(L(t')) =: b \in \reach[W]{a}$, then $h_i(t') = h_i(L(t')) = \varphi_i(b)$.
\end{claim}

\begin{proof}
Assume $h(t') = h(L(t')) =: b \in \reach[W]{a}$.
Then $b \neq \infty$, and $h(x) \in \reach[W]{a} \subseteq \reach[W]{a}$ for all $x \in \var(t')$, because $x \in \reach[G(t')]{L(t')}$ and $h \restr_{\var(t')}$ is a homomorphism $G(t') \to W$.
Therefore $h_i \restr_{\var(t')} = \varphi_i \circ h \restr_{\var(t')}$ is a homomorphism $G(t') \to G_i$,
so Proposition~\ref{prop:h(t)=h(L(t))} yields $h_i(t') = h_i(L(t')) = \varphi_i(h(L(t'))) = \varphi_i(b)$.
\clmqed{clm:2}
\end{proof}

\begin{claim}
\label{clm:3}
Let $i \in I$ with $i = a$ or $i = \{a,a'\}$, and let $s', t' \in T(X)$.
If $h(s') = h(t')$, then $h_i(s') = h_i(t')$.
\end{claim}

\begin{proof}
Let $b := h(s') = h(t')$.
If $b \notin \reach[W]{a}$, then $h_i(s') = \infty = h_i(t')$ by Claim~\ref{clm:1}.
If $b \in \reach[W]{a}$, then $h_i(s') = h_i(L(s')) = \varphi_i(b) = h_i(L(t')) = h_i(t')$ by Claim~\ref{clm:2}.
\clmqed{clm:3}
\end{proof}

\noindent
Now we return to the proof of Proposition~\ref{prop:Mod-qId-ab}.
Let $a := h(s) = h(L(s))$.
We need to distinguish between the cases $h(t) = \infty$ and $h(t) \neq \infty$.

Case 1: $h(t) = \infty$.
We have $h_a(s_j) = h_a(t_j)$ for all
$j \in \{1, \dots, n\}$
by Claim~\ref{clm:3},
$h_a(s) = h_a(L(s)) = \varphi_a(a) \neq \infty$ by Claim~\ref{clm:2}, and
$h_a(t) = \infty$ by Claim~\ref{clm:1}.
Thus $G_a \nsatisfies \alpha \rightarrow \beta$, as witnessed by the assignment $h_a$.
We have reached a contradiction.

Case 2: $h(t) \neq \infty$.
Let $a' := h(t) = h(L(t))$.
We need to consider three subcases.

Case 2.1: $a' \notin \reach[W]{a}$.
We have $h_a(s_j) = h_a(t_j)$ for all
$j \in \{1, \dots, n\}$
by Claim~\ref{clm:3},
$h_a(s) = h_a(L(s)) = \varphi_a(a) \neq \infty$ by Claim~\ref{clm:2}, and
$h_a(t) = \infty$ by Claim~\ref{clm:1}.
Thus $G_a \nsatisfies \alpha \rightarrow \beta$, a contradiction.

Case 2.2: $a \notin \reach[W]{a'}$.
Analogously to the previous case, we obtain $G_{a'} \nsatisfies \alpha \rightarrow \beta$, a contradiction.

Case 2.3: $\reach[W]{a} = \reach[W]{a'}$.
We have $h_{\{a,a'\}}(s_j) = h_{\{a,a'\}}(t_j)$ for all
$j \in \{1, \dots, n\}$
by Claim~\ref{clm:3},
and
$h_{\{a,a'\}}(s) = \varphi_{\{a,a'\}}(a) \neq \varphi_{\{a,a'\}}(a') = h_{\{a,a'\}}(t)$ by Claim~\ref{clm:2} and condition \ref{prop:IPK-ab:b}.
Thus $G_{\{a,a'\}} \nsatisfies \alpha \rightarrow \beta$, a contradiction.
\end{proof}

\begin{theorem}
\label{thm:char-finite}
For any class $\mathcal{K}$ of graphs,
$\Modf \qId \mathcal{K} = \sI \sPfsps \mathcal{K}$.
\end{theorem}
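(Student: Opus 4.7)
The plan is that this theorem is essentially a direct combination of the two preceding propositions, followed by a small finiteness observation. No new ideas are needed; the bulk of the work has already been packaged into Propositions~\ref{prop:IPK-ab} and~\ref{prop:Mod-qId-ab}.

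First I would chain the two bi-implications. For a finite graph $W$, Proposition~\ref{prop:Mod-qId-ab} gives
\[
W \in \Modf \qId \mathcal{K} \iff \text{conditions \ref{prop:IPK-ab:a} and \ref{prop:IPK-ab:b} of Proposition~\ref{prop:IPK-ab} hold for $W$,}
\]
and Proposition~\ref{prop:IPK-ab} gives
\[
\text{conditions \ref{prop:IPK-ab:a} and \ref{prop:IPK-ab:b} hold for $W$} \iff W \in \sI \sPsps \mathcal{K}.
\]
Concatenating yields $W \in \Modf \qId \mathcal{K} \iff W \in \sI \sPsps \mathcal{K}$ for every finite $W$.

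It remains to replace $\sPsps$ by $\sPfsps$ on the right-hand side. The inclusion $\sI \sPfsps \mathcal{K} \subseteq \sI \sPsps \mathcal{K}$ is immediate from $\sPfsps \mathcal{K} \subseteq \sPsps \mathcal{K}$, and every member of $\sI \sPfsps \mathcal{K}$ is finite, so it lies in $\Modf \qId \mathcal{K}$ by the previous paragraph. Conversely, if $W$ is finite and $W \in \sI \sPsps \mathcal{K}$, then $W$ is isomorphic to some $\widetilde{W} \in \sPsps \mathcal{K}$; since $\widetilde{W}$ is then finite, $\widetilde{W} \in \sPfsps \mathcal{K}$, hence $W \in \sI \sPfsps \mathcal{K}$.

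The only point worth commenting on, and which I would flag explicitly since it also justifies the remark accompanying the definition of $\sPfsps \mathcal{K}$, is that the construction of $\widetilde{W}$ carried out in the proof of Proposition~\ref{prop:IPK-ab} uses the index set
\[
I = \bigl\{ a \bigm| a \in V(W) \bigr\} \cup \bigl\{ \{a,a'\} \bigm| a, a' \in V(W), \, a \neq a', \, \reach[W]{a} = \reach[W]{a'} \bigr\},
\]
which is finite whenever $V(W)$ is. Hence a finite $W \in \sI \sPsps \mathcal{K}$ is even realized as a strong pointed subproduct over a finite family of factors from $\mathcal{K}$, reinforcing that the finite strong pointed subproducts are genuinely those built from finitely many factors. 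There is no real obstacle in this argument; it is essentially a bookkeeping step that assembles the machinery developed in Sections~\ref{sec:sps} and the current section.
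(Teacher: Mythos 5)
Your proof is correct and follows the same route as the paper, which simply states that the theorem follows immediately from Propositions~\ref{prop:IPK-ab} and \ref{prop:Mod-qId-ab}; your additional bookkeeping about replacing $\sPsps$ by $\sPfsps$ and the finiteness of the index set $I$ is exactly the observation the paper records in the remark attached to the definition of $\sPfsps$.
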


\begin{proof}
Follows immediately from Propositions~\ref{prop:IPK-ab} and \ref{prop:Mod-qId-ab}.
\end{proof}

\begin{remark}
Proposition~\ref{prop:Mod-qId-ab} does not hold for infinite graphs, as will be illustrated by Example~\ref{ex:paths}.
Nonetheless, the assumption on the finiteness of $W$ can be relaxed a bit.
The proof requires finiteness only at the point where we construct implications with premise $\Sigma(\reach[W]{a})$; this must be a finite set of identities.
Consequently, Proposition~\ref{prop:Mod-qId-ab} holds for every graph $W$ in which the set $\reach[W]{a}$ is finite for every vertex $a \in V(W)$.
\end{remark}

In order to deal with arbitrary members of $\Mod \qId \mathcal{K}$, we need to consider directed unions.

\begin{lemma}
\label{lem:ModSigma-dirun}
For any set $\Sigma$ of implications, $\Mod \Sigma$ is closed under directed unions.
\end{lemma}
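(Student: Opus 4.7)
The plan is to exploit the fact that a single implication $\alpha \rightarrow \beta \in \Sigma$ mentions only finitely many variables, so any assignment into the directed union $G = \bigcup_{i \in I} G_i$ that is relevant to this implication can be ``absorbed'' into a single member $G_{i^*}$, whence the hypothesis $G_{i^*} \satisfies \alpha \rightarrow \beta$ suffices.

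Fix $\alpha \rightarrow \beta \in \Sigma$ with $\alpha = \{s_1 \approx t_1, \dots, s_n \approx t_n\}$ and $\beta = s \approx t$, and let $Y \subseteq X$ be the (finite) set of variables occurring in these terms. Take an arbitrary assignment $h \colon X \to V(G) \cup \{\infty\}$ with $h(s_j) = h(t_j)$ for all $j \in \{1,\dots,n\}$; we must show $h(s) = h(t)$. The set $h(Y) \setminus \{\infty\}$ is a finite subset of $V(G) = \bigcup_{i \in I} V(G_i)$, so it is covered by finitely many vertex sets $V(G_{i_1}), \dots, V(G_{i_m})$. A simple induction on $m$ using the definition of a directed family (for $m=2$ it is the definition itself; for larger $m$ combine the first two members into a common supergraph and proceed) yields a single index $i^* \in I$ such that each $G_{i_k}$ is an induced subgraph of $G_{i^*}$; hence $h(Y) \setminus \{\infty\} \subseteq V(G_{i^*})$.

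The key technical observation is that, because $G_{i^*}$ is an induced subgraph of $G$, the value $h(t')$ of any term $t' \in T(Y)$ is the same whether computed in $\AG{G}$ or in $\AG{G_{i^*}}$. This is a straightforward induction on the structure of $t'$: for $t' = (t_1 t_2)$, if either subterm has value $\infty$ then both products are $\infty$; otherwise both subterm values lie in $V(G_{i^*})$, and the induced-subgraph property guarantees that the test $(h(t_1), h(t_2)) \in E$ has the same outcome in $G$ and in $G_{i^*}$, so the product is either $h(t_1)$ in both or $\infty$ in both. Extending $h\restr_Y$ arbitrarily to a full assignment $h^* \colon X \to V(G_{i^*}) \cup \{\infty\}$, we obtain $h^*(s_j) = h^*(t_j)$ for all $j$, whence $h^*(s) = h^*(t)$ by $G_{i^*} \satisfies \alpha \rightarrow \beta$, and transferring back yields $h(s) = h(t)$ in $\AG{G}$.

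Since $\alpha \rightarrow \beta \in \Sigma$ was arbitrary, $G \in \Mod \Sigma$. The only step requiring any care is the passage from a finite cover of $h(Y)$ to a single absorbing index $i^*$; everything else reduces to the elementary fact that the graph-algebra operation on two vertices of $G_{i^*}$ behaves identically inside $\AG{G_{i^*}}$ and inside $\AG{G}$.
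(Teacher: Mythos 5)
Your proof is correct and follows essentially the same route as the paper's: a single implication involves only finitely many variables, so the relevant vertex values are absorbed into a single member $G_{i^*}$ of the directed family, where the implication is known to hold. You merely make explicit two steps the paper leaves implicit, namely the induction extending pairwise directedness to finitely many members and the fact that term values agree in $\AG{G}$ and $\AG{G_{i^*}}$ because $G_{i^*}$ is an induced subgraph of the union.
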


\begin{proof}
Let $(G_i)_{i \in I}$ be a directed family of members of $\Mod \Sigma$, and let $G$ be the directed union of $(G_i)_{i \in I}$.
Let $\alpha \to \beta \in \Sigma$, with $\alpha = \{s_1 \approx t_1, \dots, s_n \approx t_n\}$, $\beta = s \approx t$.
Let $Y$ be the set of variables occurring in $\alpha \to \beta$, and let $h \colon Y \to V(G) \cup \{\infty\}$.
Since the set $Y$ is finite, the set $H := (\range h) \setminus \{\infty\}$ is finite as well.
Since $(G_i)_{i \in I}$ is directed, there exists $k \in I$ such that $H \subseteq V(G_k)$.
By our assumption, $G_k \satisfies \alpha \to \beta$, so the following implication holds:
\[
\bigl( h(s_1) = h(t_1) \wedge \dots \wedge h(s_n) = h(t_n) \bigr)
\implies
h(s) = h(t).
\]
Since $h$ was chosen arbitrarily, we have $G \satisfies \alpha \rightarrow \beta$; consequently, $G \in \Mod \Sigma$.
\end{proof}

\begin{corollary}
\label{cor:W-ModSigma-fin}
For any graph $W$ and for any set $\Sigma$ of implications, we have $W \in \Mod \Sigma$ if and only if $W' \in \Modf \Sigma$ for every finite induced subgraph $W'$ of $W$.
\end{corollary}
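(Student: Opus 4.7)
The plan is to prove the two directions separately, both of which follow quickly from results and observations already available.

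For the forward direction, I would first observe that whenever $W'$ is an induced subgraph of $W$, the graph algebra $\AG{W'}$ is a subalgebra of $\AG{W}$. Indeed, the binary operation of $\AG{W}$ always returns either its left argument or $\infty$, so $V(W')\cup\{\infty\}$ is closed under it, and since $W'$ is \emph{induced} we have $(x,y)\in E(W')$ iff $(x,y)\in E(W)$ for $x,y\in V(W')$, so the restricted operation coincides with that of $\AG{W'}$. Consequently, for every term $t$, every assignment $h\colon X\to V(W')\cup\{\infty\}$ yields the same value $h(t)$ whether computed in $\AG{W'}$ or in $\AG{W}$. So if $W\in\Mod\Sigma$, then for any implication $\alpha\to\beta\in\Sigma$ and any assignment $h\colon X\to V(W')\cup\{\infty\}$, viewing $h$ as an assignment into $\AG{W}$ shows that $h$ satisfies $\alpha\to\beta$; hence $W'\satisfies\alpha\to\beta$. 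Since $W'$ is finite, $W'\in\Modf\Sigma$.

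For the backward direction, I would invoke Lemma~\ref{lem:ModSigma-dirun} applied to the family $(W')$ of all finite induced subgraphs of $W$. This family is directed: given finite induced subgraphs $W_1, W_2$ of $W$, the induced subgraph of $W$ on $V(W_1)\cup V(W_2)$ is finite and contains both as induced subgraphs. Moreover, its directed union equals $W$, since every vertex $v\in V(W)$ lies in the one-vertex induced subgraph $\{v\}$ and every edge $(u,v)\in E(W)$ lies in the two-vertex induced subgraph $\{u,v\}$. Assuming each finite induced subgraph belongs to $\Modf\Sigma\subseteq\Mod\Sigma$, Lemma~\ref{lem:ModSigma-dirun} then yields $W\in\Mod\Sigma$.

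There is no real obstacle here; the only point that requires a moment's care is the subalgebra observation in the forward direction, which depends crucially on the ``induced'' hypothesis and on the specific form of the graph algebra operation (otherwise an edge present in $W$ but absent in $W'$ could change the value of some product). Once that is in place, the forward direction is immediate, and the backward direction is a direct application of the preceding lemma together with the remark that every graph is the directed union of its finite induced subgraphs.
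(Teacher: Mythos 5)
Your proof is correct and follows essentially the same route as the paper: the forward direction is the paper's observation that $\Mod\Sigma$ is closed under induced subgraphs (which you justify, reasonably, via the subalgebra observation the paper leaves implicit), and the backward direction is exactly the paper's application of Lemma~\ref{lem:ModSigma-dirun} to the directed family of all finite induced subgraphs of $W$.
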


\begin{proof}
``$\Rightarrow$'':
Clear, because $\Mod \Sigma$ is closed under taking induced subgraphs.

``$\Leftarrow$'':
Assume that $W' \in \Modf \Sigma$ for all finite induced subgraphs $W'$ of $W$.
Since $W$ is a directed union of all its finite induced subgraphs, Lemma~\ref{lem:ModSigma-dirun} implies $W \in \Mod \Sigma$.
\end{proof}

\begin{proof}[Proof of Theorem~\ref{thm:char}.]
Let $W \in \Mod \qId \mathcal{K}$.
By Corollary~\ref{cor:W-ModSigma-fin} and Theorem~\ref{thm:char-finite}, $W' \in \Modf \qId \mathcal{K} = \sI \sPfsps \mathcal{K}$ for every finite induced subgraph $W'$ of $W$.
Since $W$ is a directed union of all its finite induced subgraphs, we obtain $W \in \sD \sI \sPfsps \mathcal{K}$.

Let then $W \in \sD \sI \sPfsps \mathcal{K}$.
Then $W$ is a directed union of a family $(W_i)_{i \in I}$ of graphs in $\sI \sPfsps \mathcal{K}$.
By Theorem~\ref{thm:char-finite}, for all $i \in I$ we have $W_i \in \Modf \qId \mathcal{K} \subseteq \Mod \qId \mathcal{K}$.
Lemma~\ref{lem:ModSigma-dirun} implies $W \in \Mod \qId \mathcal{K}$.
\end{proof}

%%%%%%%%%%%%%%%%%%%%%%%%%%%%%%%%%%%%%%%%%%%%%%%%%%

\section{Examples and applications}
\label{sec:examples}

\subsection*{Quasivarieties of undirected graphs}

As already mentioned in the introduction, 
our work extends the results for quasivarieties of undirected graphs that were reported in~\cite{PosWes}.
These results (in particular \cite[Theorem~2.8]{PosWes}) can be obtained as a special case of our current results.
Indeed, if $G$ is an undirected graph, then for every vertex $a$, the subgraph induced by the set $\reach[G]{a}$ of vertices reachable from $a$ is exactly the connected component of $G$ containing the vertex $a$.
Therefore, in the case of finite undirected graphs, Proposition~\ref{prop:IPK-ab} and Theorem~\ref{thm:char} lead to the following proposition (here $\Guf$ denotes the class of all undirected finite graphs).

\begin{proposition}
\label{prop:fu}
Let $\mathcal{K} \subseteq \Guf$ and $W \in \Guf$.
Then $W \in \Mod \qId \mathcal{K}$ if and only if the following conditions hold:
\begin{enumerate}[label={\upshape(\arabic*)}]
\item\label{prop:fu:a}
For every one\hyp{}element connected component $H$ of $W$, there exist a graph $G_H \in \mathcal{K}$ and a strong homomorphism $\varphi_H \colon H \to G_H$.
\item\label{prop:fu:b}
For any pair $a$, $a'$ of distinct vertices belonging to the same connected component $H$ of $W$, there exist a graph $G_{\{a,a'\}} \in \mathcal{K}$ and a strong homomorphism $\varphi_{\{a,a'\}} \colon H \to G_{\{a,a'\}}$ such that $\varphi_{\{a,a'\}}(a) \neq \varphi_{\{a,a'\}}(a')$.
\end{enumerate}
\end{proposition}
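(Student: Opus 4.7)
The plan is to derive Proposition~\ref{prop:fu} as a direct specialization of Theorem~\ref{thm:char-finite} (equivalently, of Proposition~\ref{prop:IPK-ab}), by exploiting the fact that in an undirected graph reachability is symmetric. Since $W$ is finite, we have $W \in \Mod \qId \mathcal{K}$ if and only if $W \in \Modf \qId \mathcal{K}$, and by Theorem~\ref{thm:char-finite} this is equivalent to $W \in \sI \sPfsps \mathcal{K}$, which by Proposition~\ref{prop:IPK-ab} is the conjunction of conditions \ref{prop:IPK-ab:a} and \ref{prop:IPK-ab:b} there. So the real work is to show that, under the undirectedness assumption, those two conditions are equivalent to conditions~\ref{prop:fu:a} and \ref{prop:fu:b} of Proposition~\ref{prop:fu}.

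The first step is to record the translation of reachability into connectivity: because the edge relation of the undirected graph $W$ is symmetric, every walk can be reversed, so for each $a \in V(W)$ the induced subgraph $\reach[W]{a}$ is precisely the connected component of $W$ containing~$a$. In particular, two vertices $a, a' \in V(W)$ satisfy $\reach[W]{a} = \reach[W]{a'}$ if and only if they lie in the same connected component of~$W$.

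The second step handles the ``$\Rightarrow$'' direction: assume Proposition~\ref{prop:IPK-ab}\ref{prop:IPK-ab:a} and \ref{prop:IPK-ab:b} hold for $W$. Condition~\ref{prop:fu:a} follows by applying \ref{prop:IPK-ab:a} to the unique vertex $a$ of a one-element component $H = \{a\}$, observing that $\reach[W]{a} = H$. Condition~\ref{prop:fu:b} is immediate from \ref{prop:IPK-ab:b}, since distinct vertices $a, a'$ in the same component $H$ satisfy $\reach[W]{a} = \reach[W]{a'} = H$.

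The third step handles the ``$\Leftarrow$'' direction, which contains the only genuine subtlety. Assume \ref{prop:fu:a} and \ref{prop:fu:b}. Condition~\ref{prop:IPK-ab:b} of Proposition~\ref{prop:IPK-ab} is again a direct restatement of~\ref{prop:fu:b}. For \ref{prop:IPK-ab:a}, given $a \in V(W)$, let $H$ be its connected component, so $\reach[W]{a} = H$. If $H = \{a\}$, use condition~\ref{prop:fu:a} with $G_a := G_H$ and $\varphi_a := \varphi_H$. If $|H| \ge 2$, pick any $a' \in H$ with $a' \ne a$ and invoke condition~\ref{prop:fu:b} to obtain $G_{\{a,a'\}}$ and a strong homomorphism $\varphi_{\{a,a'\}} \colon H \to G_{\{a,a'\}}$; set $G_a := G_{\{a,a'\}}$ and $\varphi_a := \varphi_{\{a,a'\}}$ (the separation property $\varphi_{\{a,a'\}}(a) \neq \varphi_{\{a,a'\}}(a')$ is not needed here). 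The main point worth highlighting is exactly this case distinction: condition~\ref{prop:fu:b} has nothing to say when a component is a singleton, which is precisely why Proposition~\ref{prop:fu} must include the separate clause~\ref{prop:fu:a} for one-element components, rather than merging it into a single ``for each component'' statement.
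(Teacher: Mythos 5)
Your proposal is correct and follows essentially the same route as the paper, which obtains Proposition~\ref{prop:fu} by specializing Proposition~\ref{prop:IPK-ab} and Theorem~\ref{thm:char-finite} to the undirected case using the observation that $\reach[W]{a}$ is the connected component of $a$. The case distinction you highlight in the ``$\Leftarrow$'' direction is exactly the point the paper makes in the remark following the proposition, namely that conditions \ref{prop:fu:a} and \ref{prop:fu:b} together yield condition \ref{prop:IPK-ab:a} for all vertices.
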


Note that \ref{prop:fu:a} and \ref{prop:fu:b} ensure condition \ref{prop:IPK-ab:a} of Proposition~\ref{prop:IPK-ab} for all $a \in V(W)$.
Condition \ref{prop:fu:a} simply means: if $W$ has an isolated vertex, with or without a loop, then at least one graph in $\mathcal{K}$ must have a vertex with or without a loop, respectively.

Proposition~\ref{prop:fu} coincides with the Characterization Theorem 2.8 of \cite{PosWes} (which states that $\Modf \qId \mathcal{K} = \sI \sUf \sPhf \mathcal{K}$, where the operators $\sUf$ and $\sPhf$ provide the finite disjoint unions and the finite so\hyp{}called homogeneous subproducts, respectively);
however, in Proposition~\ref{prop:fu} we avoided the notion of homogeneous subproduct (see \cite[Definition~2.3]{PosWes}) and used their equivalent characterization by the properties \ref{prop:fu:a} and \ref{prop:fu:b}, as given in \cite[Proposition~2.4]{PosWes}.

\subsection*{Graph varieties vs.\ graph quasivarieties}

Since every identity can be viewed as an implication, every graph variety is a graph quasivariety, and it holds for any class $\mathcal{K}$ of graphs that $\Mod \qId \mathcal{K} \subseteq \Mod \Id \mathcal{K}$.
This inclusion may be proper, as shown by the following example.

\begin{example}
Let $G_0$ be the graph shown in Figure~\ref{fig:G0}, i.e., $V(G_0) = \{0,1\}$, $E(G_0) = \{(0,1), (1,0), (1,1)\}$.
As shown by McNulty and Shallon \cite[Example~3]{McNSha}, $\Mod \Id G_0$ contains every finite undirected graph without loops; in particular, $K_3$, the complete loopless graph on three vertices (see Figure~\ref{fig:K3}), belongs to $\Mod \Id G_0$.
However, as is easy to see, there is no strong homomorphism of $K_3$ into $G_0$;
hence, condition \ref{prop:IPK-ab:a} of Proposition~\ref{prop:IPK-ab} does not hold for $W = K_3$ and $\mathcal{K} = \{G_0\}$ (note that $\reach[K_3]{a} = K_3$ for every vertex $a$ of $K_3$).
Consequently, $K_3 \notin \Mod \qId G_0$ by Proposition~\ref{prop:Mod-qId-ab}.
This fact is also witnessed by the implication
\[
( x(yz) \approx x  \wedge  zx \approx \infty ) \rightarrow x(yy) \approx xy,
\]
which is satisfied by $G_0$ but not by $K_3$.
\end{example}

\begin{figure}
\tikzset{every node/.style={circle,draw,inner sep=1.5,fill=black}, every path/.style={thick}}
\begin{minipage}[b]{0.3\textwidth}
\begin{center}
\begin{tikzpicture}
\node (0) at (0,0) [label=below:{$0$}]{};
\node (1) at (1,0) [label=below:{$1$}]{};
\draw (0) edge (1);
\draw (1) edge[out=125,in=55,looseness=18] (1);
\end{tikzpicture}
\end{center}
\caption{$G_0$}
\label{fig:G0}
\end{minipage}
\begin{minipage}[b]{0.3\textwidth}
\begin{center}
\begin{tikzpicture}
\node (0) at (90:0.66) [label=above:{$0$}]{};
\node (1) at (210:0.66) [label=below:{$1$}]{};
\node (2) at (330:0.66) [label=below:{$2$}]{};
\draw (0) -- (1) -- (2) -- (0);
\end{tikzpicture}
\end{center}
\caption{$K_3$}
\label{fig:K3}
\end{minipage}
\begin{minipage}[b]{0.3\textwidth}
\begin{center}
\begin{tikzpicture}
\node (0) at (0,0) [label=below:{$0$}]{};
\node (1) at (1,0) [label=below:{$1$}]{};
\draw (0) edge (1);
\draw (0) edge[out=125,in=55,looseness=18] (0);
\draw (1) edge[out=125,in=55,looseness=18] (1);
\end{tikzpicture}
\end{center}
\caption{$K'_2$}
\label{fig:K'2}
\end{minipage}
\end{figure}

\begin{example}
With the help of Propositions~\ref{prop:IPK-ab} and \ref{prop:Mod-qId-ab}, we see also that $K'_2$, the complete graph with loops on two vertices (see Figure~\ref{fig:K'2}), does not belong to $\Mod \qId G_0$.
The only strong homomorphism of $K'_2$ into $G_0$ is the constant map sending both vertices to $1$.
However, $\reach[K'_2]{0} = \reach[K'_2]{1} = K'_2$, so condition \ref{prop:IPK-ab:b} of Proposition~\ref{prop:IPK-ab} does not hold for $W = K'_2$ and $\mathcal{K} = \{G_0\}$.
Consequently, $K'_2 \notin \Mod \qId G_0$ by Proposition~\ref{prop:Mod-qId-ab}.
This fact is also witnessed by the implication
\[
( x(yz) \approx x  \wedge  xx \approx x  \wedge zz \approx z ) \rightarrow x(yz) \approx z(yx),
\]
which is satisfied by $G_0$ but not by $K'_2$.
\end{example}

In fact, the above examples generalize easily to the following description of the quasivariety generated by $G_0$.

\begin{proposition}
A graph $G$ belongs to $\Mod \qId G_0$ if and only if 
it is undirected and each connected component is isomorphic to an induced subgraph of $G_0$ (i.e., either $G_0$, a loopless isolated vertex, or an isolated vertex with a loop).
\end{proposition}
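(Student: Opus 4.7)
The plan is to combine Theorem~\ref{thm:char} with the criteria of Propositions~\ref{prop:IPK-ab} and~\ref{prop:Mod-qId-ab}. The key preliminary observation I would establish is that, since $G_0$ has a loop precisely at $1$ and is undirected, any strong homomorphism $\varphi \colon H \to G_0$ is forced to send every loopless vertex of $H$ to $0$ and every looped vertex to $1$ (strongness preserves and reflects loops), and $H$ itself must be undirected. In particular, a strong homomorphism $H\to G_0$ is completely determined by the partition of $V(H)$ into loopless and looped vertices.

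For the $\Leftarrow$ direction I would argue as follows. Assume $G$ is undirected with each connected component isomorphic to an induced subgraph of $G_0$; by Theorem~\ref{thm:char} it suffices to show $G \in \sD \sI \sPfsps \{G_0\}$. Any finite induced subgraph $W$ of $G$ is a finite disjoint union of finite induced subgraphs of components of $G$, each itself an induced subgraph of $G_0$. By Remark~\ref{rem:IPsps-closures}, $\sI \sPsps \{G_0\}$ contains the induced subgraphs of $G_0$ and is closed under finite disjoint unions, so $W \in \sI \sPfsps \{G_0\}$. Writing $G$ as the directed union of its finite induced subgraphs then yields $G \in \sD \sI \sPfsps \{G_0\}$.

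For the $\Rightarrow$ direction I would start from $G \in \Mod \qId G_0$ and use Corollary~\ref{cor:W-ModSigma-fin} so that every finite induced subgraph $W$ of $G$ lies in $\Modf \qId G_0$ and hence satisfies conditions~\ref{prop:IPK-ab:a} and~\ref{prop:IPK-ab:b} of Proposition~\ref{prop:IPK-ab} with $\mathcal{K} = \{G_0\}$. Applying~\ref{prop:IPK-ab:a} to the two\hyp{}vertex induced subgraph on any edge $(u,v) \in E(G)$ and using the undirectedness of $G_0$ gives $(v,u) \in E(G)$, so $G$ is undirected. To bound component sizes, I would suppose a connected component $C$ of $G$ contains three distinct vertices $u, v, w$, link them by finite paths in $C$, and take $W$ to be the induced subgraph of $G$ on the union of vertices along these paths. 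Then $W$ is finite and connected, so $\reach[W]{a} = V(W)$ for every $a \in V(W)$. By the preliminary observation, any strong homomorphism $W \to G_0$ is determined by the loopless/looped partition of $V(W)$, hence cannot separate two loopless vertices or two looped vertices; this contradicts~\ref{prop:IPK-ab:b} as soon as $W$ contains such a pair, forcing $|V(W)| \leq 2$ and therefore $|V(C)| \leq 2$. A short case analysis on two\hyp{}vertex components then rules out the loopless $K_2$ (no strong homomorphism to $G_0$ exists, violating~\ref{prop:IPK-ab:a}) and $K'_2$ (both vertices must map to $1$, violating~\ref{prop:IPK-ab:b}), leaving exactly $G_0$.

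The main technical point is passing from the per\hyp{}finite\hyp{}induced\hyp{}subgraph information supplied by Proposition~\ref{prop:Mod-qId-ab} back to the global structure of each connected component of $G$; this is handled by explicitly building a finite connected induced subgraph containing the three prescribed vertices, so that the separation condition~\ref{prop:IPK-ab:b} applies within a single reachability component and supplies the contradiction.
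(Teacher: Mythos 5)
Your proposal is correct and follows essentially the same route as the paper: both directions rest on Corollary~\ref{cor:W-ModSigma-fin} together with conditions \ref{prop:IPK-ab:a} and \ref{prop:IPK-ab:b}, the key observation that a strong homomorphism into $G_0$ is determined by the loopless/looped partition, and Remark~\ref{rem:IPsps-closures} for the converse. The only differences are cosmetic: the paper gets undirectedness by noting it is an equational property of $G_0$, and it leaves implicit the passage from components of finite induced subgraphs to components of $G$, which you justify explicitly by building a connected finite induced subgraph around three prescribed vertices.
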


\begin{proof}
``$\Rightarrow$'':
Assume that $G \in \Mod \qId G_0$.
Since $G_0$ is undirected and undirectedness is an implicational (in fact, equational) property, it follows that also $G$ must be undirected.
Let $H$ be a finite induced subgraph of $G$.
By Corollary~\ref{cor:W-ModSigma-fin}, $H \in \Mod \qId G_0$; hence, by Proposition~\ref{prop:Mod-qId-ab}, $H$ satisfies conditions \ref{prop:IPK-ab:a} and \ref{prop:IPK-ab:b} of Proposition~\ref{prop:IPK-ab}.

Let now $H'$ be a connected component of $H$, and let $a, a' \in V(H')$.
Since $G$ is undirected, we have $\reach[H]{a} = \reach[H]{a'} = H'$.
If $a \neq a'$, then by condition \ref{prop:IPK-ab:b} there exists a strong homomorphism $\varphi_{\{a,a'\}} \colon H' \to G_0$ such that $\varphi_{\{a,a'\}}(a) \neq \varphi_{\{a,a'\}}(a')$.
Observe that any strong homomorphism into $G_0$ must map loopless vertices to $0$ and vertices with a loop to $1$.
Consequently, one of the vertices $a$ and $a'$ is loopless and the other has a loop.
Since the choice of $a$ and $a'$ was arbitrary, it follows that $H'$ has no more than one loopless vertex and no more than one vertex with a loop; altogether $H'$ has at most two vertices.
Therefore, $H'$ is isomorphic to an induced subgraph of $G_0$.

We have shown that the connected components of the finite induced subgraphs of $G$ are isomorphic to induced subgraphs of $G_0$.
It is clear that this holds for the connected components of the entire graph $G$ as well.

``$\Leftarrow$'':
Assume that $G$ is undirected and each connected component is isomorphic to an induced subgraph of $G_0$.
Then every finite induced subgraph of $G$ is a disjoint union of induced subgraphs of $G_0$ and thus belongs to $\sI \sPfsps G_0 = \Modf \qId G_0$ by Remark~\ref{rem:IPsps-closures}.
By Corollary~\ref{cor:W-ModSigma-fin}, $G \in \Mod \qId G_0$.
\end{proof}

\subsection*{Graph classes defined by forbidden subgraphs}

As an application of our results, we can show that
certain families of graphs specified by forbidden induced subgraphs
constitute graph quasivarieties.
We focus on 
the class of all graphs $H$ that have no induced subgraphs isomorphic to a strong homomorphic image of a fixed graph $G$, in other words, the class of all graphs admitting no strong homomorphism from $G$.

We start with a result that concerns an arbitrary graph $G$ and
the class of all graphs in which no induced term subgraph admits a strong homomorphism from $G$.
An induced subgraph $G$ of $H$ is called an \emph{induced term subgraph} of $H$ if $G$ is isomorphic to a term graph (see Definition~\ref{def:term-graph}).
Recall that every (strong) homomorphic image of a term graph is a term graph.

\begin{proposition}
\label{prop:forbidden-G}
Let $G$ be a graph, and let $\mathcal{K}$ be the class of all graphs $H$ such that there is no strong homomorphism of $G$ into any induced term subgraph of $H$.
Then $\mathcal{K}$ is a graph quasivariety.
\end{proposition}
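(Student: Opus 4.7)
My plan is to invoke the Characterization Theorem (Theorem~\ref{thm:char}). Since $\mathcal{K} \subseteq \sD \sI \sPfsps \mathcal{K}$ is automatic, I need only verify that $\mathcal{K}$ is closed under each of $\sI$, $\sD$, and $\sPfsps$. Closure under $\sI$ is immediate. For closure under $\sD$, suppose $H$ is the directed union of $(H_i)_{i \in I}$ with each $H_i \in \mathcal{K}$, and let $T$ be an induced term subgraph of $H$. Term graphs are finite by Definition~\ref{def:term-graph}, so $V(T)$ is finite; directedness then yields some $k$ with $V(T) \subseteq V(H_k)$, making $T$ an induced term subgraph of $H_k$, and $H_k \in \mathcal{K}$ forbids a strong homomorphism $G \to T$.

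The main step is closure under $\sPfsps$. Let $W$ be a finite strong pointed subproduct of $(H_i)_{i \in I}$ with each $H_i \in \mathcal{K}$, and suppose for contradiction that a strong homomorphism $\varphi\colon G \to T$ exists for some induced term subgraph $T$ of $W$. Pick $a \in V(T)$ with $\reach[T]{a} = V(T)$; since $T$ is a subgraph of $W$, walks from $a$ in $T$ are walks in $W$, yielding $V(T) \subseteq \reach[W]{a}$. By Definition~\ref{def:sps}\ref{def:sps:sup}, $Y(a) \neq \emptyset$, so fix $k \in Y(a)$; by Definition~\ref{def:sps}\ref{def:sps:str} the projection $\psi := p_k \restr_{\reach[W]{a}}\colon \reach[W]{a} \to H_k$ is a strong homomorphism, hence so is its further restriction $\psi \restr_T\colon T \to H_k$. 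Let $T'$ denote the induced subgraph of $H_k$ on $\psi(V(T))$. The strongness of $\psi \restr_T$ makes $\psi \restr_T\colon T \to T'$ a surjective strong homomorphism, so by the remark in Definition~\ref{def:term-graph}, the strong homomorphic image $T'$ of the term graph $T$ is itself a term graph, i.e., an induced term subgraph of $H_k$. Then the composition $\psi \restr_T \circ \varphi\colon G \to T'$ would be a strong homomorphism witnessing $H_k \notin \mathcal{K}$, a contradiction.

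The principal obstacle I anticipate is that the projection $\psi$ need not be injective on $T$, so passing to the image might collapse several vertices and produce an image with strictly fewer vertices than $T$; a priori one might worry that the image is no longer a term graph and hence fails to serve as a witness against $H_k \in \mathcal{K}$. The observation that resolves this---and really drives the argument---is the fact recorded in Definition~\ref{def:term-graph} that strong homomorphic images of term graphs are again term graphs, which is precisely what guarantees that $T'$ remains a legitimate induced term subgraph of the factor $H_k$. This is also the reason the proposition is phrased in terms of induced \emph{term} subgraphs rather than arbitrary induced subgraphs: the term\hyp{}graph condition is exactly what is preserved under the projections that arise in strong pointed subproducts.
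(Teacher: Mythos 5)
Your proof is correct and, at its core, identical to the paper's: both hinge on producing a strong homomorphism from the finite induced term subgraph $T$ into some member of $\mathcal{K}$ and on the fact that strong homomorphic images of term graphs are again term graphs, so that composing with the supposed strong homomorphism $G \to T$ yields a contradiction. The only difference is packaging: you verify closure of $\mathcal{K}$ under $\sD$, $\sI$, and $\sPfsps$ separately and then invoke Theorem~\ref{thm:char}, extracting the required strong homomorphism directly from the projection $p_k$ in Definition~\ref{def:sps}, whereas the paper takes $H \in \Mod \qId \mathcal{K}$, observes that $T$ is finite and hence lies in $\Modf \qId \mathcal{K} = \sI \sPfsps \mathcal{K}$, and obtains the same homomorphism from condition~\ref{prop:IPK-ab:a} of Proposition~\ref{prop:IPK-ab}, whose proof is exactly your projection argument (your separate $\sD$- and $\sI$-closure checks are absorbed there by Corollary~\ref{cor:W-ModSigma-fin} and Theorem~\ref{thm:char-finite}).
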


\begin{proof}
We need to show that $\mathcal{K} = \Mod \qId \mathcal{K}$. The inclusion $\mathcal{K} \subseteq \Mod \qId \mathcal{K}$ is clear.
For the converse inclusion, let $H \in \Mod \qId \mathcal{K}$.
Let $T$ be an arbitrary induced term subgraph of $H$, say, without loss of generality, $T = G(t)$ for some term $t$; then $T = \reach[T]{a}$ for $a := L(t)$.
In order to show that $H \in \mathcal{K}$, it is enough to show that there is no strong homomorphism of $G$ into $T$.

Since $T$ is finite, we have $T \in \Modf \qId \mathcal{K} = \sI \sPfsps \mathcal{K}$ by Corollary~\ref{cor:W-ModSigma-fin} and Theorem~\ref{thm:char-finite}
Therefore, $T$ satisfies conditions \ref{prop:IPK-ab:a} and \ref{prop:IPK-ab:b} of Proposition~\ref{prop:IPK-ab}.
By condition \ref{prop:IPK-ab:a}, there exists a graph $G_a \in \mathcal{K}$ and a strong homomorphism $\varphi_a \colon T \to G_a$.
Then $\varphi_a(T)$ is an induced term subgraph of $G_a$.
Suppose, to the contrary, that there exists a strong homomorphism $h \colon G \to T$.
Then $\varphi_a \circ h$ is a strong homomorphism of $G$ into $\varphi_a(T)$, which is an induced term subgraph of $G_a$,
in contradiction to the definition of the class $\mathcal{K}$.
\end{proof}

The class of graphs described in Proposition~\ref{prop:forbidden-G} is a quasivariety; hence it is definable by a set of implications.
The proposition itself does not, however, provide such a defining set of implications.
We are now going to construct one.

Let $G$ be a graph.
A strongly connected component $C$ of $G$ is called a \emph{source} if there is no edge $(u,v) \in E(G)$ such that $u \in V(G) \setminus V(C)$, $v \in V(C)$.
Fix a transversal $S$ of the vertex sets of the sources of $G$.
Then $V(G) = \bigcup_{a \in S} \reach[G]{a}$, and there is no proper subset $S'$ of $S$ such that $V(G) = \bigcup_{a \in S'} \reach[G]{a}$.
For each $\varphi \colon S \to \IN$, let $S_\varphi^0 := \{a \in S \mid \varphi(a) = 0\}$, $S_\varphi^+ := \{a \in S \mid \varphi(a) > 0\}$, and let $T_\varphi$ be a directed tree comprising a root $r$ and, for each $a \in S_\varphi^+$, a directed path of length $\varphi(a)$ from $r$ to $a$, such that these directed paths are pairwise disjoint (except for the common root vertex $r$) and such that $V(T_\varphi) \cap V(G) = S_\varphi^+$.
Then $G \cup T_\varphi$ is the graph obtained by gluing $G$ and $T_\varphi$ together at the common vertices $S_\varphi^+$.
Now, let $G_\varphi$ be the graph obtained from $G \cup T_\varphi$ by gluing together $r$ and all vertices in $S_\varphi^0$, that is,
\begin{align*}
V(G_\varphi) &:= V(G \cup T_\varphi) \setminus S_\varphi^0, \\
E(G_\varphi) &:= E(G \cup T_\varphi) \cup \{(r,x) \mid \exists a \in S_\varphi^0 \colon (a,x) \in E(G)\}
\\ & \phantom{{}:= E(G \cup T_\varphi)} \cup \{(x,r) \mid \exists a \in S_\varphi^0 \colon (x,a) \in E(G)\}.
\end{align*}
Note that $G_\varphi$ is a finite graph and every one of its vertices is reachable from $r$; hence $G_\varphi$ is a term graph.

Let $\Xi_G := \{\alpha_\varphi \rightarrow \beta \mid \varphi \in \IN^S\}$, where
$\alpha_\varphi := \Sigma(G) \cup \Gamma_\mathrm{e}(T_\varphi) \cup \{x_a \approx x_r \mid a \in S_\varphi^0\}$ and
$\beta := x_r \approx \infty$.
(Recall the notation of Definition~\ref{def:Sigma-Gamma}.)

\begin{proposition}
\label{prop:forbidden-identities}
Let $G$ be a finite graph.
The class $\mathcal{K}$ of all graphs $H$ such that there is no strong homomorphism of $G$ into any induced term subgraph of $H$ is defined by the set $\Xi_G$ of implications specified above.
\end{proposition}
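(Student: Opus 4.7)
The plan is to prove both inclusions $\mathcal{K} \subseteq \Mod \Xi_G$ and $\Mod \Xi_G \subseteq \mathcal{K}$ by contraposition.

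For $\mathcal{K} \subseteq \Mod \Xi_G$, I will assume some $\alpha_\varphi \rightarrow \beta \in \Xi_G$ fails in $H$, witnessed by an assignment $h$ with $h(x_r) \neq \infty$ that makes $\alpha_\varphi$ true, and from this construct an induced term subgraph of $H$ admitting a strong homomorphism from $G$. Define $h' \colon V(G_\varphi) \to V(H) \cup \{\infty\}$ by $h'(v) := h(x_v)$; the identities $\{x_a \approx x_r \mid a \in S_\varphi^0\}$ ensure that the edges of $G_\varphi$ obtained by rerouting edges of $G$ at $r$ are respected. Two applications of Lemma~\ref{lem:SigmaG}\ref{lem:SigmaG-inf}---first to $T_\varphi$ starting from $r$, then to $G$ starting from each $a \in S$, using $V(G) = \bigcup_{a \in S} \reach[G]{a}$---show that $h'$ takes values in $V(H)$, and the edge identities in $\alpha_\varphi$ then make $h'$ a homomorphism $G_\varphi \to H$. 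Let $T$ be the induced subgraph of $H$ on $h'(V(G_\varphi))$. Since $G_\varphi$ is a finite term graph with source $r$ and $h'$ is a homomorphism, any walk from $r$ in $G_\varphi$ pushes forward to a walk from $h'(r)$ whose vertices all lie in $V(T)$; because $T$ is induced in $H$, this walk lies inside $T$, so $\reach[T]{h'(r)} = V(T)$, and $T$ is an induced term subgraph of $H$ by Definition~\ref{def:term-graph}. Finally, Lemma~\ref{lem:SigmaG}\ref{lem:SigmaG-Sigma} makes $h' \restr_{V(G)}$ a strong homomorphism $G \to H$, and its corestriction $G \to T$ remains strong because $T$ is induced; this contradicts $H \in \mathcal{K}$.

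For $\Mod \Xi_G \subseteq \mathcal{K}$, assume there exist an induced term subgraph $T$ of $H$ with a vertex $p$ satisfying $\reach[T]{p} = V(T)$ and a strong homomorphism $\psi \colon G \to T$; I will exhibit a failing $\alpha_\varphi \rightarrow \beta \in \Xi_G$. For each $a \in S$, let $\varphi(a)$ be the length of a shortest walk in $T$ from $p$ to $\psi(a)$, and for each $a \in S_\varphi^+$ fix such a walk $p = u_0^a, u_1^a, \dots, u_{\varphi(a)}^a = \psi(a)$. Define an assignment $h$ by $h(x_v) := \psi(v)$ for $v \in V(G)$, $h(x_r) := p$, and $h(x_{v_i^a}) := u_i^a$ along each tree path of $T_\varphi$; the definition is consistent on the shared vertices $S_\varphi^+$. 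Then $h$ satisfies $\Sigma(G)$ by Lemma~\ref{lem:SigmaG}\ref{lem:SigmaG-Sigma} (since $T$ is induced in $H$, the map $\psi \colon G \to H$ remains strong), satisfies $\Gamma_\mathrm{e}(T_\varphi)$ because walk edges in $T$ are edges of $H$, and satisfies each $x_a \approx x_r$ with $a \in S_\varphi^0$ since then $\psi(a) = p$; yet $h(x_r) = p \neq \infty$, so $\beta$ fails, contradicting $H \in \Mod \Xi_G$.

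The main subtlety I foresee is in the first direction: I need to verify that the image graph $T$ really is an induced term subgraph of $H$, which requires the reachability of $V(G_\varphi)$ from $r$ to translate, via the homomorphism $h'$, into reachability of $V(T)$ from $h'(r)$ \emph{inside} $T$ (using only edges of $T$, not arbitrary edges of $H$); this works precisely because $T$ is induced in $H$. The bookkeeping around the identification of $S_\varphi^0$ with $r$---which is what makes $h'$ respect the edges of $G_\varphi$ that were rerouted at $r$---and the verification that the several fragments of $\alpha_\varphi$ jointly encode the desired homomorphism and strongness properties are routine but must be carried out carefully.
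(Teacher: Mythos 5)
Your proposal is correct and follows essentially the same route as the paper: both inclusions are proved by contraposition, with Lemma~\ref{lem:SigmaG} translating between assignments satisfying $\alpha_\varphi$ and (strong) homomorphisms, the tree $T_\varphi$ and the identities $x_a \approx x_r$ handling reachability from the root, and the image of $G_\varphi$ (equivalently $G \cup T_\varphi$) serving as the induced term subgraph. The only differences are cosmetic bookkeeping choices (working with $G_\varphi$ rather than $G \cup T_\varphi$, propagating non-$\infty$ values in two stages, and spelling out why the image is an induced term subgraph).
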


\begin{proof}
Assume first that $H \notin \mathcal{K}$.
Then there exists a strong homomorphism $h \colon G \to H$ such that $h(G)$ is contained in a term subgraph, say $G(t)$, of $H$.
Let $\tilde{r} := L(t)$.
Let $S$ be the transversal of the sources of $G$ that was fixed in the construction of $\Xi_G$.
For each $a \in S$, there is a directed path $p_a$ from $\tilde{r}$ to $h(a)$.
Let $\varphi \colon S \to \IN$ be the mapping that sends each $a \in S$ to the length of $p_a$.
We can clearly extend $h$ into a homomorphism $h' \colon G \cup T_\varphi \to H$ satisfying $h'(r) = \tilde{r}$.
Since $h'(a) = \tilde{r} = h'(r)$ for all $a \in S_\varphi^0$, the assignment $h'$ makes true the identities $x_a \approx x_r$ for $a \in S_\varphi^0$.
Furthermore, $h'$ makes true every identity in $\Gamma_\mathrm{e}(T_\varphi)$ and $\Sigma(G)$ by Lemma~\ref{lem:SigmaG}, parts \ref{lem:SigmaG-Gamma} and \ref{lem:SigmaG-Sigma}, respectively.
On the other hand, $h'(x_r) = \tilde{r} \neq \infty$, so $h'$ makes $\beta$ false.
Hence $H \nsatisfies \alpha_\varphi \rightarrow \beta$, so $H \notin \Mod \Xi_G$.

Assume now that $H \notin \Mod \Xi_G$.
Then there is an implication $\alpha_\varphi \rightarrow \beta \in \Xi_G$ such that $H \nsatisfies \alpha_\varphi \rightarrow \beta$, that is, there is an assignment $h \colon V(G \cup T_\varphi) \to V(H) \cup \{\infty\}$ that makes every identity in $\alpha_\varphi$ true and $\beta$ false; hence $h(x_r) \neq \infty$.
For each $a \in S_\varphi^0$, we obtain $h(x_a) = h(x_r) \neq \infty$ from the fact that $h$ makes the identity $x_a \approx x_r$ true.
By applying Lemma~\ref{lem:SigmaG}\ref{lem:SigmaG-inf} to $r$ and each vertex $a \in S_\varphi^0$, we get $h(x_b) \neq \infty$, i.e., $h(x_b) \in V(H)$ for every $b \in \reach[G \cup T_\varphi]{r}$ and for every $b \in \reach[G \cup T_\varphi]{a}$, $a \in S_\varphi^0$.
But $\reach[G \cup T_\varphi]{r} \cup \bigcup_{a \in S_\varphi^0} \reach[G \cup T_\varphi]{a} = V(G \cup T_\varphi)$, so $h(x_b) \in V(H)$ for every $b \in V(G \cup T_\varphi)$.
It follows from Lemma~\ref{lem:SigmaG}, parts \ref{lem:SigmaG-Gamma} and \ref{lem:SigmaG-Sigma}, that $h$ is a homomorphism of $G \cup T_\varphi$ into $H$ and its restriction to $V(G)$ is a strong homomorphism of $G$ into the subgraph of $H$ induced by $h(G \cup T_\varphi)$, which is a term graph because $h(G \cup T_\varphi) = h(G_\varphi)$ and $G_\varphi$ is a term graph.
Therefore $H \notin \mathcal{K}$.
\end{proof}

For an infinite graph $G$, we obtain a set of implications defining the variety of Proposition~\ref{prop:forbidden-G} by taking the union of all sets $\Xi_{G'}$ (as in Proposition~\ref{prop:forbidden-identities}) for every finite strong homomorphic image $G'$ of $G$.

Proposition~\ref{prop:forbidden-G} takes a simpler form when $G$ is a term graph.
In this case, the strong homomorphic images of $G$ into a graph $H$ are induced term subgraphs of $H$; hence, $\mathcal{K}$ is simply the class of all graphs admitting no strong homomorphism from $G$.
Furthermore, we can construct a single implication defining the class $\mathcal{K}$.

\begin{proposition}
\label{prop:ex-qv}
Let $t \in T(X)$ be a term, and
let $\mathcal{K}$ be the class of all graphs $H$ such that there does not exist any strong homomorphism of $G(t)$ into $H$.
Then $\mathcal{K}$ is a graph quasivariety, defined by the implication $\alpha \rightarrow \beta$, where $\alpha := \Sigma(G(t))$ and $\beta := L(t) \approx \infty$.
\end{proposition}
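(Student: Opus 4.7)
The plan is to show directly that $\mathcal{K} = \Mod\{\alpha \to \beta\}$, from which it follows immediately that $\mathcal{K}$ is a graph quasivariety (defined by a single implication). Both inclusions should be straightforward consequences of Lemma~\ref{lem:SigmaG}, once we notice the key feature of term graphs: $G(t) = \reach[G(t)]{L(t)}$, that is, every vertex of $G(t)$ is reachable from its leftmost variable.

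For the inclusion $\mathcal{K} \subseteq \Mod\{\alpha \to \beta\}$, I would proceed by contraposition. Assume $H \nsatisfies \alpha \to \beta$; then there exists an assignment $h \colon X_{V(G(t))} \to V(H) \cup \{\infty\}$ making every identity in $\Sigma(G(t))$ true while falsifying $\beta$, so $h(L(t)) \neq \infty$. Since $\Gamma_\mathrm{e}(G(t)) \subseteq \Sigma(G(t))$ and every vertex of $G(t)$ lies in $\reach[G(t)]{L(t)}$, Lemma~\ref{lem:SigmaG}\ref{lem:SigmaG-inf} forces $h(x_b) \in V(H)$ for every $b \in V(G(t))$. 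Then Lemma~\ref{lem:SigmaG}\ref{lem:SigmaG-Sigma} says that $h$ is a strong homomorphism of $G(t)$ into $H$, contradicting $H \in \mathcal{K}$.

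For the reverse inclusion, again by contraposition: if $H \notin \mathcal{K}$, pick a strong homomorphism $\varphi \colon G(t) \to H$ and define the assignment $h \colon X_{V(G(t))} \to V(H) \cup \{\infty\}$ by $h(x_a) := \varphi(a)$ for $a \in V(G(t))$. By Lemma~\ref{lem:SigmaG}\ref{lem:SigmaG-Sigma}, $h$ makes every identity of $\Sigma(G(t))$ true; on the other hand, $h(L(t)) = \varphi(L(t)) \in V(H)$, so $h$ falsifies $\beta$. Hence $H \nsatisfies \alpha \to \beta$, giving $H \notin \Mod\{\alpha \to \beta\}$.

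I do not expect a serious obstacle: the proposition is essentially an immediate specialization of Lemma~\ref{lem:SigmaG}, with the term-graph hypothesis ensuring that the single ``seed'' condition $h(L(t)) \neq \infty$ propagates via reachability to force $h$ to take values in $V(H)$ on \emph{all} of $V(G(t))$. The only minor point requiring care is the mild notational overlap created by Definition~\ref{def:Sigma-Gamma}: in $\beta = L(t) \approx \infty$, the symbol $L(t)$ is simultaneously the leftmost-variable vertex $a$ of $G(t)$ and the variable $x_a$ of $X_{V(G(t))}$, but this is exactly the intended convention.
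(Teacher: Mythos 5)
Your proposal is correct and follows essentially the same route as the paper: both directions rest on Lemma~\ref{lem:SigmaG}, parts \ref{lem:SigmaG-Sigma} and \ref{lem:SigmaG-inf}, together with the observation that $\reach[G(t)]{L(t)} = G(t)$ lets the single condition $h(L(t)) \neq \infty$ propagate to all of $V(G(t))$. The paper merely phrases both implications as ``$H \notin \mathcal{K}$ iff $H \nsatisfies \alpha \rightarrow \beta$'' rather than as two contrapositions, which is the same argument.
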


\begin{proof}
Assume $H \notin \mathcal{K}$.
Then there exists a strong homomorphism $h \colon G(t) \to H$.
By Lemma~\ref{lem:SigmaG}\ref{lem:SigmaG-Sigma}, $h$ is an assignment that makes every identity in $\Sigma(G(t)) = \alpha$ true.
Moreover, $h(L(t)) \in V(H)$, so $h(L(t)) \neq \infty$, i.e., $h$ makes $\beta$ false.
Thus $H \nsatisfies \alpha \rightarrow \beta$.

Conversely, assume $H \nsatisfies \alpha \rightarrow \beta$.
Then there exists an assignment $h \colon \var(t) \to V(H) \cup \{\infty\}$ that makes every identity in $\alpha$ true and $\beta$ false.
By Lemma~\ref{lem:SigmaG}\ref{lem:SigmaG-inf}, we have $h(x) \neq \infty$, i.e., $h(x) \in V(H)$ for all $x \in \reach[G(t)]{L(t)} = G(t)$.
It follows from Lemma~\ref{lem:SigmaG}\ref{lem:SigmaG-Sigma} that $h$ is a strong homomorphism of $G(t)$ into $H$.
Thus $H \notin \mathcal{K}$.
\end{proof}

Proposition~\ref{prop:ex-qv} in turn takes a simpler form when $G$ is a term graph that has no strong homomorphic image not isomorphic to $G$ itself.
In this case, $\mathcal{K}$ is the class of all $G$-free graphs.

\begin{example}[{cf.\ \cite[Example~4.3]{PosWes}}]
A graph $G$ is \emph{perfect} if for every induced subgraph $H$ of $G$, the chromatic number of $H$ equals the size of the largest clique in $H$.
A graph $G$ is \emph{Berge} if it has no induced subgraph isomorphic to $C_{2k+1}$ (odd cycle) or $\overline{C}_{2k+1}$ (the complement of an odd cycle) for $k \geq 2$.
Conjectured by Berge~\cite{Berge} and proved by Chudnovsky, Robertson, Seymour, and Thomas~\cite{CRST}, the ``strong perfect graph theorem'' asserts that a graph is perfect if and only if it is Berge.

The odd cycles $C_{2k+1}$ and their complements $\overline{C}_{2k+1}$ are undirected connected graphs and hence term graphs.
Consequently, the strong perfect graph theorem and Proposition~\ref{prop:ex-qv} provide a characterization of the class of perfect graphs by the following set of implications:
\[
\{
x_0 x_0 \approx \infty, \,
x_0 (x_1 x_0) \approx x_0 x_1
\}
\cup
\{ I_G \mid G \in \{C_{2k+1}, \overline{C}_{2k+1} \mid k \geq 2 \} \},
\]
where $I_G$ denotes the implication $\Sigma(G(t)) \rightarrow L(t) \approx \infty$ for some term $t$ such that $G(t) \cong G$ as in Proposition~\ref{prop:ex-qv}.
The first two identities (recall that every identity can be viewed as an implication) express that the graph is loopless and undirected.
\end{example}

\subsection*{Directed unions and the finiteness of strong pointed subproducts}

It is easy to see that the inclusion
\begin{equation}
\sI \sPsps \mathcal{K} \subseteq \Mod \qId \mathcal{K}
\label{eq:IPK-MqIK}
\end{equation}
holds for any class $\mathcal{K}$ of graphs.
For, if $G \in \sI \sPsps \mathcal{K}$,
then Proposition~\ref{prop:IPK-ab} provides strong homomorphisms from certain subgraphs of $G$ into members of $\mathcal{K}$.
By taking appropriate restrictions of these strong homomorphisms, we see that every finite induced subgraph of $G$ belongs to $\sI \sPfsps \mathcal{K}$.
Since $G$ is a directed union of all its finite induced subgraphs, we obtain $G \in \sD \sI \sPfsps \mathcal{K} = \Mod \qId \mathcal{K}$.

As shown by the following example, the inclusion \eqref{eq:IPK-MqIK} may be proper; hence directed unions cannot be omitted from the Characterization Theorem~\ref{thm:char}.

\begin{example}
\label{ex:paths}
For $n \in \IN$, denote by $P_n$ the directed path of length $n$, i.e., $V(P_n) := \{0, \dots, n\}$, $E(P_n) := \{(i,i+1) \mid 0 \leq i \leq n-1\}$, and denote by $P_\omega$ the countably infinite directed path, i.e., $V(P_n) := \IN$, $E(P_n) := \{(i,i+1) \mid i \in \IN\}$.
Let $\mathcal{K} := \{P_n \mid n \in \IN\}$ be the class of all finite directed paths.
The infinite path $P_\omega$ is the directed union of all finite paths, so $P_\omega \in \sD \sI \sPfsps \mathcal{K} = \Mod \qId \mathcal{K}$.
However, there is no strong homomorphism of $P_\omega$ into any finite path $P_n$, so $P_\omega \notin \sI \sPsps \mathcal{K}$ by Proposition~\ref{prop:IPK-ab}.
\end{example}

Nevertheless, the inclusion \eqref{eq:IPK-MqIK}
implies
\[
\sD \sI \sPfsps \mathcal{K}
\subseteq
\sD \sI \sPsps \mathcal{K}
\subseteq
\sD (\Mod \qId \mathcal{K})
=
\sD \sD \sI \sPfsps \mathcal{K}
=
\sD \sI \sPfsps \mathcal{K};
\]
hence, we can drop the finiteness condition from strong pointed subproducts in the Characterization Theorem~\ref{thm:char}, and we obtain
$\Mod \qId \mathcal{K} = \sD \sI \sPsps \mathcal{K}$.

%%%%%%%%%%%%%%%%%%%%%%%%%%%%%%%%%%%%%%%%%%%%%%%%%%

\end{document}